\newtheorem{theorem}{Theorem}
\newtheorem{lemma}[theorem]{Lemma}
\newtheorem{corollary}[theorem]{Corollary}
\newtheorem{observation}[theorem]{Observation}
\newcommand{\dom}{\gamma}
\newcommand{\wcol}{\text{wcol}}
\title{On distance $r$-dominating and $2r$-independent sets in sparse graphs}
\author{Zdeněk Dvořák\thanks{Charles University, Prague, Czech Republic.
E-mail: {\tt rakdver@iuuk.mff.cuni.cz}.  Supported by project 17-04611S (Ramsey-like aspects of graph
coloring) of Czech Science Foundation.}}
\date{}
\begin{document}
\maketitle

\begin{abstract}
Dvořák~\cite{apxdomin} gave a bound on the minimum size of a distance $r$ dominating set in the terms
of the maximum size of a distance $2r$ independent set and generalized coloring numbers, thus obtaining
a constant factor approximation algorithm for the parameters in any class of graphs with bounded expansion.
We improve and clarify this dependence using an LP-based argument inspired by the work of Bansal and Umboh~\cite{BANSAL201721}.
\end{abstract}

A set $X$ of vertices of a graph $G$ is \emph{dominating} if each vertex of $G$ either belongs to or has a neighbor in $X$,
and it is \emph{independent} if no two vertices of $X$ are adjacent.
The \emph{domination number} $\dom(G)$ of $G$ is the minimum size of a dominating set in $G$,
and the \emph{independence number} $\alpha(G)$ of $G$ is the maximum size of an independent set in $G$.
Determining either of these parameters in a general graph is NP-complete~\cite{Karp}.  Even approximating them
is hard. No polynomial-time algorithm approximating the domination number of an $n$-vertex graph within a factor better than $O(\log n)$
exists~\cite{raz1997sub}, unless $\text{P}=\text{NP}$.  Even worse,  for every $\varepsilon>0$, 
no polynomial-time algorithm approximating the independence number of an $n$-vertex graph within a factor better than $O(n^{1-\varepsilon})$
exists~\cite{haastad1999clique}, unless $\text{ZPP}=\text{NP}$.

Both parameters become more tractable in sparse graphs---they have a PTAS in planar graphs~\cite{baker1994approximation}
and other related graph classes, most generally in all graph classes with strongly sublinear separators~\cite{har2015approximation}.
To obtain constant-factor approximation, much weaker constraints suffice.  Lenzen and Wattenhofer~\cite{apxdomindeg}
proved that the domination number can be approximated within factor $a^2+3a+1$ on graphs with \emph{arboricity} at most $a$
(i.e., for graphs whose edge sets can be partitioned into at most $a$ forests).  This was improved to $3a$ by Bansal and Umboh~\cite{BANSAL201721}
using a simple LP-based argument, even under a weaker assumption.
\begin{theorem}[Bansal and Umboh~\cite{BANSAL201721}]\label{thm-domap}
For any positive integer $a$, if a graph $G$ has an orientation with maximum indegree at most $a$, then a dominating set in $G$ of size at most $3a\dom(G)$ can be found
in polynomial time.
\end{theorem}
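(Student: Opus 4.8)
The plan is to follow the LP-rounding paradigm. First I would write the standard covering relaxation of the dominating set problem: introduce a variable $x_v\in[0,1]$ for every vertex $v$, minimize $\sum_v x_v$ subject to $\sum_{u\in N[v]} x_u\ge 1$ for every $v$, where $N[v]$ denotes the closed neighborhood. Every dominating set gives a feasible $0/1$ solution, so the optimum $\mathrm{LP}$ of this relaxation satisfies $\mathrm{LP}\le\dom(G)$, and an optimal fractional solution $x$ can be computed in polynomial time. It therefore suffices to round $x$ to an integral dominating set of size at most $3a\cdot\mathrm{LP}$.

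Next I would bring in the orientation. Fix an orientation of $G$ with maximum indegree at most $a$, and for each vertex $v$ split its constraint according to edge directions, writing $\sum_{u\in N[v]} x_u = x_v+\sum_{u\to v} x_u+\sum_{v\to u} x_u\ge 1$, i.e.\ into the self-weight, the in-weight $\mathrm{in}(v)=\sum_{u\to v} x_u$ (a sum of at most $a$ terms), and the out-weight $\mathrm{out}(v)=\sum_{v\to u} x_u$ (possibly many terms). The single structural fact driving everything is that, although an individual out-weight may be large, the out-weights are globally cheap: since every vertex is the head of at most $a$ arcs,
\[ \sum_v \mathrm{out}(v) = \sum_u d^-(u)\, x_u \le a\sum_u x_u = a\,\mathrm{LP}, \]
equivalently each vertex $u$ is an out-neighbor of at most $a$ vertices. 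This asymmetry is what lets us pay for domination.

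The rounding itself would then treat two kinds of vertices. A vertex whose self-plus-in-weight is a constant fraction of $1$ can be dominated from within its closed in-neighborhood $\{v\}\cup\{u : u\to v\}$, a set of at most $a+1$ vertices; picking the heaviest of these (threshold rounding on a bounded-frequency cover) dominates it at a cost that charges to the LP weight of the chosen in-neighbors. A vertex whose out-weight carries the bulk of its constraint is instead dominated by selecting the vertex itself (or a heavy out-neighbor), and the number of such vertices is controlled by the displayed inequality, since each contributes a constant to a sum bounded by $a\,\mathrm{LP}$; here one charges $v$ proportionally to the weights $x_u$ of its out-neighbors, and because each $u$ receives such charges from at most $a$ vertices, no $x_u$ is overcharged. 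Verifying that the resulting set $D$ is dominating is routine in both cases; the payoff is that $|D|$ equals the total charge, and every unit of LP weight is charged a bounded number of times.

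The step I expect to be the real obstacle is tuning these two mechanisms so that the combined charge is at most $3a$ per unit of LP weight rather than the roughly $4a$ that the two losses --- the frequency factor $\approx a$ from threshold rounding on the in-side, and the factor $\approx 2$ from paying for out-heavy vertices by themselves --- produce when optimized independently. Matching the tight constant $3a$ is exactly the contribution of Bansal and Umboh, and I expect it to require a single unified charging scheme in which the budget $3a\,x_u$ of each vertex $u$ is shared between its duty to dominate itself and its duty to pay, as an out-neighbor, for the at most $a$ vertices pointing into it, rather than two separately analyzed phases.
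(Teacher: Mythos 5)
Your setup is exactly the right machinery --- the covering LP, the orientation, the global bound $\sum_v \mathrm{out}(v)=\sum_u d^-(u)x_u\le a\cdot\mathrm{LP}$, and the two rounding mechanisms (threshold rounding against closed in-neighborhoods, self-selection for out-heavy vertices) --- and carried out as written it does yield a correct polynomial-time approximation with factor roughly $4a+2$. But the theorem claims $3a$, and you explicitly leave the passage from $\approx 4a$ to $3a$ as an unresolved obstacle, so the statement is not proven. Moreover, your guess about what closes the gap (a budget $3a\,x_u$ per vertex ``shared'' between two duties) is not quite the right mechanism.

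The missing idea is that the two charges can be arranged to land on \emph{disjoint} sets of vertices, so the two per-unit rates do not add up --- the total cost is their maximum, not their sum. Concretely: fix a single threshold $\tau=1/(2a+1)$, let $X_0=\{u:x_u\ge\tau\}$, and then add every vertex $v$ whose closed neighborhood misses the set chosen so far. The key point is that such a $v$ has \emph{no} heavy vertex anywhere in $N[v]$ --- neither among its in-neighbors nor among its out-neighbors (otherwise $v$ would already be dominated by $X_0$). Consequently $x_v+\mathrm{in}(v)<(a+1)\tau$, forcing $\mathrm{out}(v)\ge 1-(a+1)\tau=a/(2a+1)$, and the charge $x_u$ that $v$ sends to each of its out-neighbors $u$ goes only to vertices $u\notin X_0$; each such $u$ is charged at most $a$ times, once for each of its at most $a$ in-neighbors. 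So heavy vertices pay only for $X_0$, at rate $1/\tau=2a+1$ per unit of LP weight, while light vertices pay only for the second phase, at rate $a\cdot\frac{2a+1}{a}=2a+1$ per unit of LP weight, giving a dominating set of size at most $(2a+1)\mathrm{LP}\le(2a+1)\dom(G)\le 3a\,\dom(G)$ for $a\ge 1$. This disjointness is precisely the observation ``since $X_i\neq X_{i-1}$, none of the vertices in $N_r[v_i]$ belongs to $X_{i-1}$'' in the proof of Theorem~\ref{thm-dom}, whose case $r=1$ is exactly this theorem (and which in fact yields the slightly better constant $2a+1=(\Delta_0+1)\Delta_1-\Delta_0$ for a $1$-augmentation with $\Delta_1\le a+1$).
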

Regarding independent sets, let us remark that if $G$ has an orientation with maximum indegree at most $a$, then its maximum average degree is at most $2a$. Consequently, $G$ has a proper coloring
using at most $2a+1$ colors, and one of the color classes gives an independent set of size at least $|V(G)|/(2a+1)$, which approximates the independence number within
the factor $2a+1$.

We consider distance generalizations of domination and independence number and the relationship between them.
A set $X\subseteq V(G)$ of vertices of a graph $G$ is \emph{$r$-dominating} if each vertex of $G$ is at distance at most $r$ from $X$.
For a vertex $v\in V(G)$, let $N_r[v]$ denote the set of vertices of $G$ at distance at most $r$ from $v$.
A set $Y\subseteq V(G)$ is \emph{$2r$-independent} if the distance between any two vertices of $Y$ is greater than $2r$,
or equivalently, if $|N_r[v]\cap Y|\le 1$ for each $v\in V(G)$.

Since each vertex $r$-dominates at most one vertex of a $2r$-independent set, we have $|Y|\le |X|$ for every $r$-dominating set $X$ and $2r$-independent set $Y$ in the graph $G$.
Hence, defining $\dom_r(G)$ as the minimum size of an $r$-dominating set in $G$ and $\alpha_{2r}(G)$
as the maximum size of a $2r$-independent set in $G$, we have the following inequality.
\begin{observation}\label{obs-rel}
For any positive integer $r$, every graph $G$ satisfies $$\alpha_{2r}(G)\le \dom_r(G).$$
\end{observation}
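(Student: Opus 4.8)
The plan is to prove the stronger pointwise statement that $|Y|\le|X|$ holds for \emph{every} $r$-dominating set $X$ and \emph{every} $2r$-independent set $Y$ of $G$, and then to recover the claimed inequality by specializing: choose $Y$ to be a maximum $2r$-independent set and $X$ to be a minimum $r$-dominating set, so that $|Y|=\alpha_{2r}(G)$ and $|X|=\dom_r(G)$, which yields $\alpha_{2r}(G)\le\dom_r(G)$.

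To establish the pointwise bound I would exhibit an injection $f\colon Y\to X$, from which $|Y|\le|X|$ follows immediately. For each $y\in Y$, the $r$-domination property of $X$ guarantees that some vertex of $X$ lies at distance at most $r$ from $y$; I would fix one such vertex and define $f(y)$ to be it. The map is then well defined on all of $Y$, and the entire argument reduces to checking injectivity.

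The heart of the proof---and the only place the $2r$-independence hypothesis is used---is injectivity, which I would verify via the triangle inequality for the graph metric. Suppose $f(y_1)=f(y_2)=x$ for some $y_1,y_2\in Y$. Then $\mathrm{dist}(y_1,x)\le r$ and $\mathrm{dist}(y_2,x)\le r$, so $\mathrm{dist}(y_1,y_2)\le 2r$; equivalently, both $y_1$ and $y_2$ lie in $N_r[x]$, so $|N_r[x]\cap Y|\ge 2$ unless $y_1=y_2$. Since $Y$ is $2r$-independent we have $|N_r[v]\cap Y|\le 1$ for every vertex $v$, and in particular for $v=x$; hence $y_1=y_2$, and $f$ is injective.

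I do not expect a genuine obstacle here, as the statement is essentially a packing (pigeonhole) argument in which the triangle inequality does all the work. The only points needing care are that distances are measured in $G$, so that the triangle inequality is legitimately available, and that the quantifier structure is respected: the pointwise inequality must be established for arbitrary admissible $X$ and $Y$ before one substitutes the extremal sets to conclude.
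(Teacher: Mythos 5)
Your proof is correct and matches the paper's argument: the paper's one-line justification (``each vertex $r$-dominates at most one vertex of a $2r$-independent set, hence $|Y|\le|X|$ for every $r$-dominating set $X$ and $2r$-independent set $Y$'') is precisely your injection $f\colon Y\to X$ with injectivity from the triangle inequality and the condition $|N_r[x]\cap Y|\le 1$. You have simply spelled out the same packing argument in more detail.
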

The relationship between $\dom_r$ and $\alpha_{2r}$ becomes clearer when we consider their LP relaxations.
Let
\begin{align*}
\dom^\star_r(G)&=\min \sum_{v\in V(G)} x_v\\
\text{subject to}&\\
\sum_{v\in N_r[u]} x_v&\ge 1&\text{for all $u\in V(G)$}\\
x_v&\ge 0&\text{for all $v\in V(G)$},
\end{align*}
and
\begin{align*}
\alpha^\star_{2r}(G)&=\max \sum_{u\in V(G)} y_u\\
\text{subject to}&\\
\sum_{u\in N_r[v]} y_u&\le 1&\text{for all $v\in V(G)$}\\
y_u&\ge 0&\text{for all $u\in V(G)$}.
\end{align*}
Since the programs defining the two parameters are dual, we obtain the following chain of inequalities.
\begin{observation}\label{obs-relfr}
For any positive integer $r$, every graph $G$ satisfies $$\alpha_{2r}(G)\le \alpha^\star_{2r}(G)=\dom^\star_r(G)\le \dom_r(G).$$
\end{observation}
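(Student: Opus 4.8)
The plan is to prove the chain in three pieces: the two outer inequalities arise by exhibiting integral feasible solutions to the two linear programs, and the central equality is an instance of strong LP duality.

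For $\alpha_{2r}(G)\le\alpha^\star_{2r}(G)$, I would take a maximum $2r$-independent set $Y$ and feed its characteristic vector ($y_u=1$ for $u\in Y$, else $y_u=0$) into the packing LP. The reformulation of $2r$-independence noted above, namely $|N_r[v]\cap Y|\le 1$ for every $v$, says exactly that $\sum_{u\in N_r[v]}y_u\le 1$, so this vector is feasible; its value is $|Y|=\alpha_{2r}(G)$, and maximizing over all feasible (in particular this integral) solutions gives the bound. The inequality $\dom^\star_r(G)\le\dom_r(G)$ is the mirror image: a minimum $r$-dominating set $X$ gives the covering-LP vector with $x_v=1$ for $v\in X$ and $x_v=0$ otherwise, whose feasibility is precisely the statement that every vertex is $r$-dominated, and minimizing over all feasible solutions yields the bound.

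For the central equality $\alpha^\star_{2r}(G)=\dom^\star_r(G)$ I would appeal to strong LP duality, first checking that the two programs are genuinely dual. The covering LP carries one constraint per vertex $u$, hence its dual has one variable $y_u$ per vertex, and the dual inequality associated with the primal variable $x_v$ reads $\sum_{u:\,v\in N_r[u]}y_u\le 1$. The one point needing care is the symmetry of the $r$-neighborhood relation: because graph distance is symmetric, $v\in N_r[u]$ holds if and only if $u\in N_r[v]$, and therefore this dual inequality coincides with the packing constraint $\sum_{u\in N_r[v]}y_u\le 1$ defining $\alpha^\star_{2r}(G)$. Both programs are feasible---the all-ones vector satisfies every covering constraint since $u\in N_r[u]$, and the zero vector satisfies every packing constraint---so strong duality applies and the two optimal values coincide.

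I do not anticipate any real obstacle; the only nonroutine step is verifying the primal--dual correspondence through the symmetry of $N_r$, after which strong duality closes the chain.
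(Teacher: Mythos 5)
Your proposal is correct and follows exactly the route the paper intends: the paper dismisses this observation with the single remark that the two programs are dual, and your write-up simply fills in the routine details (integral characteristic vectors as feasible LP solutions for the outer inequalities, and strong duality---with the symmetry $v\in N_r[u]\iff u\in N_r[v]$ justifying the primal--dual correspondence---for the central equality). No discrepancy to report.
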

Note that $\alpha^\star_{2r}(G)=\dom^\star_r(G)$ can be determined exactly in polynomial time by solving the linear programs that define them.

For any integer $r\ge 1$, the ratio $\dom_r(G)/\alpha_{2r}(G)$ can be arbitrarily large even for graphs of arboricity at most $3$,
the class of graphs studied by Lenzen and Wattenhofer~\cite{apxdomindeg}, as we will see below.  Dvořák~\cite{apxdomin} found a bound on this ratio
in terms of a stronger sparsity parameter.
Let $v_1$, $v_2$, \ldots, $v_n$ be an ordering of the vertices of a graph $G$.
A vertex $v_a$ is \emph{weakly $k$-accessible} from $v_b$ if $a\le b$ and there exists a path $v_a=v_{i_0}, v_{i_1}, \ldots, v_{i_{\ell}}=v_b$ of length $\ell\le k$ in $G$ such that
$a\le i_j$ for $0\le j\le \ell$. 
For a fixed ordering of $V(G)$, let $Q_k(v)$ denote the set of vertices that are weakly $k$-accessible from $v$
and let $q_k(v)=|Q_k(v)|$.  The \emph{weak $k$-coloring number} of the ordering is the maximum of $q_k(v)$ over $v\in V(G)$.
The weak $k$-coloring number $\wcol_k(G)$ of $G$ is the minimum of the weak $k$-coloring numbers
over all orderings of $V(G)$.

Weak coloring numbers were first defined by Kierstead and Yang~\cite{kierstead2003orderings}
as a distance generalization of degeneracy or ordinary coloring number---for any integer $d$,
a graph has weak $1$-coloring number at most $d+1$ if and only if $G$ is $d$-degenerate, i.e., each subgraph of $G$ has a vertex of degree at most $d$.
Weak $2$-coloring number is similarly related to another well studied graph parameter, \emph{arrangeability}~\cite{arr2, arr3, arr1}.
They also play an important role in the theory of graph classes with bounded expansion~\cite{nesbook}.
The starting point of our discourse is the following bound on the ratio $\dom_r(G)/\alpha_{2r}(G)$.
\begin{theorem}[Dvořák~\cite{apxdomin}]\label{thm-apxdomin}
For any positive integer $r$, every graph $G$ satisfies
$$\dom_r(G)\le \wcol_{2r}^2(G)\alpha_{2r}(G).$$
\end{theorem}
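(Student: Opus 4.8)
The plan is to split the inequality into the two integrality gaps suggested by the linear programs and glue them with LP duality. Concretely, I would prove
$$\dom_r(G)\le \wcol_{2r}(G)\cdot\dom^\star_r(G)\qquad\text{and}\qquad \alpha^\star_{2r}(G)\le \wcol_{2r}(G)\cdot\alpha_{2r}(G),$$
and then chain them through the equality $\dom^\star_r(G)=\alpha^\star_{2r}(G)$ of Observation~\ref{obs-relfr}:
$$\dom_r(G)\le \wcol_{2r}(G)\cdot\dom^\star_r(G)=\wcol_{2r}(G)\cdot\alpha^\star_{2r}(G)\le \wcol_{2r}^2(G)\cdot\alpha_{2r}(G).$$
Each of the two rounding steps thus contributes one factor of $\wcol_{2r}(G)$. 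Throughout I fix an ordering of $V(G)$ attaining $\wcol_{2r}(G)$, so that $q_{2r}(v)\le\wcol_{2r}(G)$ for every $v$, and I note $Q_r(u)\subseteq Q_{2r}(u)$, hence $|Q_r(u)|\le\wcol_{2r}(G)$ as well.

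The single combinatorial tool behind both steps is a \emph{meeting-vertex} observation: if $a$ and $b$ are joined by a path of length at most $k$ and $w$ is the vertex of smallest order on a shortest $a$--$b$ path, then every vertex of that path lies in $N_k[a]\cap N_k[b]$ and has order at least that of $w$, so $w\in Q_k(a)\cap Q_k(b)$. Taking $k=r$, any dominator $v\in N_r[u]$ shares a vertex of $Q_r(u)$ with $u$; taking $k=2r$, any two vertices at distance at most $2r$ share a vertex of $Q_{2r}(\cdot)$, and (cutting a length-$\le 2r$ path at its middle) they moreover lie in a common ball $N_r[z]$, which is precisely the support of one constraint of the two programs.

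For the domination gap I would start from an optimal fractional $r$-dominating assignment $x$ and, for each $u$, route its unit of demand $\sum_{v\in N_r[u]}x_v\ge 1$ onto $Q_r(u)$ by sending the weight of each dominator $v$ to a meeting vertex in $Q_r(u)\cap Q_r(v)$; since $|Q_r(u)|\le\wcol_{2r}(G)$, some vertex of $Q_r(u)$ receives at least $1/\wcol_{2r}(G)$ of the demand, and, being weakly $r$-accessible from $u$, it lies in $N_r[u]$ and can serve as $u$'s dominator. Collecting these low vertices yields an integral $r$-dominating set whose size I would bound against $\sum_v x_v$ by a charging argument. Dually, I would round an optimal fractional $2r$-independent assignment $y$ by grouping each vertex $u$ according to the minimum-order vertex $z$ of $N_r[u]$; every vertex of a group lies in $N_r[z]$, so the constraint $\sum_{u\in N_r[z]}y_u\le 1$ caps each group's weight by $1$, producing at least $\alpha^\star_{2r}(G)$ groups, from which I would select one vertex per group to form a genuine $2r$-independent set.

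The main obstacle is controlling multiplicities so that each step loses exactly $\wcol_{2r}(G)$ rather than its square. In the domination step a single fractional dominator can be close to up to $\wcol_{2r}(G)$ of the chosen low vertices, while each chosen vertex already absorbs demand pooled from several balls, and charging both at once would inflate the bound to $\wcol_{2r}^2(G)$ in this step alone; dually, in the independence step the chosen group representatives need not be pairwise at distance more than $2r$, so extracting an independent subset of size at least (number of groups)$/\wcol_{2r}(G)$ is exactly the delicate point. Getting a clean single factor requires selecting representatives consistently along the ordering---always the minimum-order vertex of the relevant neighbourhood---so that the ``incoming'' and ``outgoing'' multiplicities are never paid simultaneously, and making this bookkeeping rigorous is where the real work lies. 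This is presumably precisely the dependence that the LP-based argument inspired by Bansal and Umboh~\cite{BANSAL201721} later streamlines.
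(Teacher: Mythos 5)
Your proposal has genuine gaps at both of its pillars, and your own final paragraph concedes as much. On the domination side, the meeting-vertex charging you describe, executed as stated, loses a factor $\wcol_{2r}(G)$ twice within that single step (once because a chosen low vertex pools demand from many balls, once because a single fractional dominator can be charged by many chosen low vertices), giving only $\dom_r(G)\le\wcol^2_{2r}(G)\dom^\star_r(G)$. That alone exhausts the entire budget of Theorem~\ref{thm-apxdomin}; and since $\alpha^\star_{2r}(G)/\alpha_{2r}(G)$ is genuinely unbounded (the paper's graphs $K^{(r)}_n$ have $\alpha^\star_{2r}\ge n/2$ but $\alpha_{2r}\le 2$), no slack remains for the second step, so the ``bookkeeping'' you defer is not a refinement but the entire content of the theorem. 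On the independence side the gap is worse than bookkeeping: having produced at least $\alpha^\star_{2r}(G)$ nonempty groups, you must extract from the representatives a genuine $2r$-independent set while losing only a factor $\wcol_{2r}(G)$, which requires a chromatic-number or degeneracy bound for the conflict graph joining representatives at distance at most $2r$. The meeting-vertex observation does not provide one: the low vertex witnessing a conflict need not itself be a representative, so no back-degree count closes. This extraction is exactly the step the paper works hardest on, and even then it cannot be done with a single factor: the paper first builds a $(2r,b)$-independent set via hypergraph $b$-matchings (Theorem~\ref{thm-magap}, Lemma~\ref{lemma-out}, Theorem~\ref{thm-ind}) and only then prunes it by a coloring argument (Lemma~\ref{lemma-makeindep}), at total cost $4\wcol^2_r(G)\wcol_{2r}(G)$. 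Your intermediate claim $\alpha^\star_{2r}(G)\le\wcol_{2r}(G)\alpha_{2r}(G)$ is therefore stronger than anything established in the paper, and nothing in your sketch supports it.

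A separate point of orientation: this paper does not prove Theorem~\ref{thm-apxdomin} at all; it quotes it from~\cite{apxdomin}. The paper's own LP-based chain deliberately pays in $\wcol_r$ rather than $\wcol_{2r}$ on the rounding side---Theorem~\ref{thm-dom} and Corollary~\ref{cor-domincol} give $\dom_r(G)\le\wcol^2_r(G)\dom^\star_r(G)$ via $r$-augmentations---and it ends with the incomparable bound $\dom_r(G)\le 4\wcol^4_r(G)\wcol_{2r}(G)\alpha_{2r}(G)$, an improvement over Theorem~\ref{thm-apxdomin} only when $\wcol_{2r}(G)$ is large relative to $\wcol_r(G)$. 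Your domination-side claim $\dom_r(G)\le\wcol_{2r}(G)\dom^\star_r(G)$ is essentially the $\wcol_{2r}(G)$-factor approximation the paper attributes to Amiri et al.~\cite{amiri2017distributed}; that is a theorem in its own right, not a consequence of the observation that $Q_r(u)$ and $Q_r(v)$ meet along short paths. So the factorization $\wcol_{2r}\cdot\wcol_{2r}$ through the LP reconstructs neither the cited proof nor the paper's argument, and as written both halves of your plan remain open.
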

On the other hand, the ratio $\dom_r(G)/\alpha_{2r}(G)$ cannot be bounded by a function
of $\wcol_{2r-1}(G)$. Let us now give a construction showing this fact.
For a hypergraph $H$ and a positive integer $r$, let $H^{(r)}$ be the graph obtained as follows.  Let $H_1$ be
the incidence graph of $H$, i.e., the bipartite graph with parts $V(H)$ and $E(H)$ such that $v\in V(H)$ is adjacent to $e\in E(H)$
iff $v\in e$.  Let $H_2$ be the graph obtained from $H_1$ by subdividing each edge by $r-1$ vertices.  Finally, $H^{(r)}$
is obtained from $H_2$ by adding a new vertex $u$ joined by new paths of length $r$ to all vertices of $V(H_2)\setminus V(H)$.
The relevant properties of $H^{(r)}$ are given by the following lemma, which we prove in Section~\ref{sec:wca}.
\begin{lemma}\label{lemma-constr}
Let $H$ be a hypergraph of minimum degree $\delta\ge 1$ and let $r$ be a positive integer.
\begin{itemize}
\item $\wcol_{r-1}(H^{(r)})\le r^2-r+3$.
\item If each edge of $H$ has size most $t$, then $\wcol_{2r-1}(H^{(r)})\le r^2-r+t+2$.
\item If each two vertices of $H$ are incident with a common edge, then $\alpha_{2r}(H^{(r)})\le 2$.
\item If no $p$ edges of $H$ cover all vertices of $H$, then $\dom_r(H^{(r)})>p$.
\item $\dom_r^\star(G)=\alpha^\star_{2r}(G)\le |E(H)|/\delta+1$.
\end{itemize}
\end{lemma}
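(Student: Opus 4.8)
The plan is to split the vertex set of $H^{(r)}$ into the \emph{hubs} $\{u\}\cup V(H)\cup E(H)$ and the \emph{path vertices} (the $r-1$ subdivision vertices inserted on each incidence, together with the $r-1$ internal vertices of each path attached to $u$), and to use repeatedly the elementary distances $d(v,e)=r$ for $v\in e$, $d(u,w)\le r$ for every $w\in V(H_2)\setminus V(H)$, and $d(u,v)=r+1$ for $v\in V(H)$; each follows because a subdivided incidence or an attached path has length $r$ and admits no shortcut. Granting these, the last two items are immediate. For the LP bound I would exhibit the feasible fractional $r$-dominating set putting weight $1$ on $u$ and $1/\delta$ on every edge vertex: $u$ lies within distance $r$ of every non-original vertex, and each $v\in V(H)$ sees its $\ge\delta$ incident edge vertices within distance $r$, so every covering constraint holds and the total weight is $|E(H)|/\delta+1$; equality with $\alpha^\star_{2r}$ is Observation~\ref{obs-relfr}. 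For $\alpha_{2r}\le 2$ I would note that $V(H)$ and its complement $N_r[u]$ each have the property that any two of their vertices lie within distance $2r$ (through a common edge vertex, respectively through $u$), so a $2r$-independent set meets each of the two parts in at most one vertex.

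For the domination lower bound I would, given an $r$-dominating set $X$, attach to each $x\in X$ a single hyperedge $e(x)$ and argue that the original vertices $r$-dominated by $x$ all lie in $e(x)$: inspecting $N_r[x]$, an edge vertex dominates exactly the vertices of its own hyperedge, a subdivision or attached-path vertex dominates at most the single original vertex at the foot of the incidence it sits on, and an original vertex dominates only itself (every other original vertex is at distance $\ge 2r$). Hence the at most $|X|$ edges $\{e(x):x\in X\}$ cover $V(H)$, so $|X|\le p$ would contradict the hypothesis and $\dom_r(H^{(r)})>p$.

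The main obstacle is the two bounds on weak coloring numbers. I would fix the ordering listing the hubs first in the order $u$, then $V(H)$, then $E(H)$, and then the path vertices arbitrarily. The key point is that a weakly accessible \emph{path} vertex can never be reached along a path meeting a hub, since the hub has smaller index and would violate minimality; therefore the path vertices weakly accessible from any given vertex all lie in the single component of path vertices left after deleting the hubs. Each such component is a \emph{comb}: the $r-1$ subdivision vertices of one incidence form a spine, and each of them carries a pendant path of $r-1$ attached vertices, for a total of $(r-1)r=r^2-r$ vertices. It then remains only to count the weakly accessible hubs. Within radius $r-1$ the reachable hubs are just those adjacent to the comb, namely the spine ends $v_0,e$ and the teeth apex $u$, giving at most $r^2-r+3$. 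Within radius $2r-1$ the longer paths can, through the edge vertex $e$, additionally reach the $\le t$ original vertices of $e$; placing $V(H)$ before $E(H)$ is exactly what blocks the symmetric escape through $v_0$ into other edge vertices, so the reachable hubs reduce to $u$, $e$, and the $\le t$ vertices of $e$, giving $r^2-r+t+2$. The delicate step I expect to spend the most care on is verifying that under this ordering no further hubs (other edge vertices, distant original vertices) leak into the weakly accessible set and that the comb carries no second incidence, since this is precisely what pins down the constants $+3$ and $+t+2$.
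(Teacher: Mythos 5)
Your proposal is correct and takes essentially the same route as the paper: the identical ordering ($u$, then $V(H)$, then $E(H)$, then the path vertices), the same comb-component count of $r^2-r$ plus boundary hubs for the weak coloring bounds, the same feasible LP solution ($1$ on $u$, $1/\delta$ on each edge vertex), the same two-part partition ($V(H)$ versus $N_r[u]$) for $\alpha_{2r}(H^{(r)})\le 2$, and the same assignment of a covering edge $e(x)$ to each vertex of an $r$-dominating set. The only harmless imprecisions are that the components arising from the $u$-to-$E(H)$ paths are bare paths rather than combs, and that the hub vertices' own weakly accessible sets (of size at most $t+2$) should also be checked, as the paper does explicitly.
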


In particular, taking $H$ as the complete graph $K_n$, we have $\alpha_{2r}(K^{(r)}_n)\le 2$, $\dom_r(K^{(r)}_n)\ge n/2$,
and $\wcol_{2r-1}(K^{(r)}_n)\le r^2-r+4$, showing that $\dom_r/\alpha_{2r}$ cannot be bounded by a function of
$\wcol_{2r-1}$.  Furthermore, observe that the graph $K^{(r)}_n$ has arboricity at most $3$.

By Observation~\ref{obs-relfr}, $\alpha^\star_{2r}(G)=\dom^\star_r(G)$ approximates $\dom_r(G)$ and $\alpha_{2r}(G)$ within factor of $\wcol_{2r}^2(G)$.
Let us remark that Amiri et al.~\cite{amiri2017distributed} gave an improved approximation algorithm for $\dom_r$, within factor of $\wcol_{2r}(G)$.

In this note, we aim to clarify the relationship between distance domination number, distance independence number, and weak coloring numbers.
Firstly, generalizing the LP-based approach of Bansal and Umboh~\cite{BANSAL201721}, we show in Theorem~\ref{thm-dom} that $\dom_r(G)$ can be approximated within a factor expressed
in terms of $\wcol_r(G)$, by bounding the ratio $\dom_r(G)/\dom^\star_r(G)$; we also show that the ratio cannot be bounded
in terms of $\wcol_{r-1}(G)$.

Note that $\alpha^\star_{2r}(K^{(r)}_n)\ge n/2$, as shown by setting $y_v=1/2$ for $v\in V(K_n)$ and
$y_v=0$ for all other vertices $v$ of $K^{(r)}_n$; and thus the ratio $\alpha^\star_{2r}(G)/\alpha_{2r}(G)$ cannot be bounded
even in terms of $\wcol_{2r-1}(G)$.
To work around this issue, we consider a relaxed version of $2r$-independent set.  We say that a set $Y\subseteq V(G)$
is \emph{$(2r,b)$-independent} if $|N_r[v]\cap Y|\le b$ for all $v\in V(G)$.  Let $\alpha_{2r,b}(G)$ be the maximum
size of a $(2r,b)$-independent set in $G$.  Note that setting $y_v=1/b$ for $v\in Y$ and $y_v=0$ for $v\in V(G)\setminus Y$
gives a feasible solution to the program defining $\alpha^\star_{2r}$, and thus we have the following.
\begin{observation}
For any graph $G$ and positive integers $r$ and $b$,
$$\alpha_{2r}(G)\le \alpha_{2r,b}(G)\le b\alpha^\star_{2r}(G).$$
\end{observation}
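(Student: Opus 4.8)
The plan is to establish the two inequalities separately, both directly from the definitions, with no auxiliary machinery.

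For the left-hand inequality $\alpha_{2r}(G)\le\alpha_{2r,b}(G)$, I would first observe that a $2r$-independent set is exactly a $(2r,1)$-independent set: the defining condition $|N_r[v]\cap Y|\le 1$ for all $v\in V(G)$ is precisely the $(2r,b)$-independence condition specialized to $b=1$. Since $1\le b$, any set satisfying $|N_r[v]\cap Y|\le 1$ for every $v$ also satisfies $|N_r[v]\cap Y|\le b$ for every $v$, so each $2r$-independent set is in particular $(2r,b)$-independent. Maximizing $|Y|$ over the larger family of $(2r,b)$-independent sets can only increase the optimum, giving $\alpha_{2r}(G)\le\alpha_{2r,b}(G)$.

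For the right-hand inequality $\alpha_{2r,b}(G)\le b\,\alpha^\star_{2r}(G)$, I would carry out the LP-rounding argument indicated in the sentence preceding the statement. Let $Y$ be a $(2r,b)$-independent set of maximum size, so that $|Y|=\alpha_{2r,b}(G)$, and define the candidate solution $y_v=1/b$ for $v\in Y$ and $y_v=0$ for $v\in V(G)\setminus Y$. The nonnegativity constraints $y_u\ge 0$ hold by construction. For the covering constraints, fix any $v\in V(G)$ and compute
$$\sum_{u\in N_r[v]} y_u=\frac{|N_r[v]\cap Y|}{b}\le\frac{b}{b}=1,$$
where the inequality is exactly the $(2r,b)$-independence of $Y$. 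Hence $(y_u)_{u\in V(G)}$ is feasible for the program defining $\alpha^\star_{2r}(G)$, and its objective value is $\sum_{v\in V(G)} y_v=|Y|/b=\alpha_{2r,b}(G)/b$. Since $\alpha^\star_{2r}(G)$ is the maximum objective value over all feasible solutions, we obtain $\alpha_{2r,b}(G)/b\le\alpha^\star_{2r}(G)$, which rearranges to the desired bound.

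Regarding difficulty: there is genuinely no obstacle here, as both inequalities follow immediately from unwinding the definitions. The only point requiring verification is the feasibility of the scaled indicator vector $(1/b)\mathbf{1}_Y$ for the $\alpha^\star_{2r}$ program, and this is exactly where the $(2r,b)$-independence hypothesis enters; the scaling factor $1/b$ is chosen precisely so that the worst-case constraint $|N_r[v]\cap Y|\le b$ is met with equality at value $1$. The chain then fits into the broader discussion as a relaxation of Observation~\ref{obs-relfr}, replacing the exact parameter $\alpha_{2r}$ by the more robust $\alpha_{2r,b}$ at the cost of the multiplicative factor $b$.
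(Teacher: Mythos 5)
Your proof is correct and matches the paper's own argument: the paper justifies this observation with exactly the same scaled indicator solution $y_v=1/b$ for $v\in Y$ (stated in the sentence immediately preceding the observation), with the left inequality following trivially since any $2r$-independent set is $(2r,b)$-independent. Your write-up merely spells out the feasibility check and the trivial containment in more detail.
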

We show in Theorem~\ref{thm-ind} that for $b=\wcol^2_r(G)$, the ratio $\alpha^\star_{2r}(G)/\alpha_{2r,b}(G)$ is at most $2$.

Finally, we link the results with Theorem~\ref{thm-apxdomin}.  In Lemma~\ref{lemma-makeindep}, we show that the ratio $\alpha_{2r,b}(G)/\alpha_{2r}(G)$ is
bounded in terms of $\wcol_{2r}(G)$.  Composing all the results, we obtain the following chain of inequalities.
\begin{theorem}\label{thm-summary}
For any graph $G$ and a positive integer $r$,
$$\frac{1}{\wcol^2_r(G)}\dom_r(G)\le \dom^\star_r(G)=\alpha^\star_{2r}(G)\le 2\alpha_{2r,\wcol^2_r(G)}(G)\le 4\wcol^2_r(G)\wcol_{2r}(G)\alpha_{2r}(G).$$
\end{theorem}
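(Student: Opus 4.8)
The plan is to read the displayed formula as four separate links and to check each, since Theorem~\ref{thm-summary} is exactly the concatenation of the three inequalities that the introduction attributes to Theorem~\ref{thm-dom}, Theorem~\ref{thm-ind} and Lemma~\ref{lemma-makeindep}, glued together by the duality identity $\dom^\star_r(G)=\alpha^\star_{2r}(G)$ already recorded in Observation~\ref{obs-relfr}. Throughout I fix, for each relevant $k$, an ordering of $V(G)$ whose weak $k$-coloring number equals $\wcol_k(G)$, and I set up one reusable device: for a vertex $z$ write $R_k(z)=\{u\in V(G): z\in Q_k(u)\}$. Then $R_k(z)\subseteq N_k[z]$ (weak accessibility implies distance at most $k$), every vertex $u$ lies in $|Q_k(u)|\le\wcol_k(G)$ of the sets $R_k(\cdot)$, and if $d_G(u,u')\le k$ then the lowest-indexed vertex $z$ on a shortest $u$--$u'$ path is weakly $k$-accessible from both endpoints, so $u,u'\in R_k(z)$ and the two subpath lengths sum to $d_G(u,u')\le k$. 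I would establish this ``shared hub'' observation first; each link below is one application of it.

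For the first link, $\dom_r(G)\le\wcol_r^2(G)\,\dom^\star_r(G)$ (Theorem~\ref{thm-dom}), let $c=\wcol_r(G)$ and let $x$ be optimal for $\dom^\star_r(G)$, so $\sum_{v\in N_r[u]}x_v\ge1$ for every $u$. The shared-hub observation with $k=r$ gives $N_r[u]\subseteq\bigcup_{z\in Q_r(u)}R_r(z)$, and since $|Q_r(u)|\le c$ some hub $z=h(u)\in Q_r(u)$ collects $\sum_{v\in R_r(z)}x_v\ge1/c$. I take $X=\{h(u):u\in V(G)\}$; each $u$ has $d_G(u,h(u))\le r$, so $X$ is $r$-dominating. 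Charging, $|X|/c\le\sum_{z\in X}\sum_{v\in R_r(z)}x_v=\sum_v x_v\,|\{z\in X:v\in R_r(z)\}|\le c\sum_v x_v$, whence $|X|\le c^2\dom^\star_r(G)$.

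For the third link, $\alpha^\star_{2r}(G)\le 2\,\alpha_{2r,c^2}(G)$ (Theorem~\ref{thm-ind}), take $y$ optimal for $\alpha^\star_{2r}(G)$, so $\sum_{u\in R_r(z)}y_u\le\sum_{u\in N_r[z]}y_u\le1$ for every $z$ and $y_u\le1$ for every $u$. I build $Y$ greedily through the support of $y$, adding $u$ unless some $z\in Q_r(u)$ already has $|R_r(z)\cap Y|=c$. The invariant $|R_r(z)\cap Y|\le c$ with $N_r[v]\subseteq\bigcup_{z\in Q_r(v)}R_r(z)$ gives $|N_r[v]\cap Y|\le c^2$, so $Y$ is $(2r,c^2)$-independent. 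Counting incidences, $c\cdot(\#\text{saturated})\le\sum_z|R_r(z)\cap Y|=\sum_{u\in Y}|Q_r(u)|\le c|Y|$, so there are at most $|Y|$ saturated hubs; each rejected support vertex lies in $R_r(z)$ for some saturated $z$, hence $\sum_{u\notin Y}y_u\le\sum_{\text{saturated }z}\sum_{u\in R_r(z)}y_u\le|Y|$, and with $\sum_{u\in Y}y_u\le|Y|$ we get $\alpha^\star_{2r}(G)\le 2|Y|$.

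The last link, $\alpha_{2r,b}(G)\le 2b\,\wcol_{2r}(G)\,\alpha_{2r}(G)$ applied with $b=c^2$ (Lemma~\ref{lemma-makeindep}), is where I expect the real work. Starting from a $(2r,b)$-independent set $Y$, I form the conflict graph on $Y$ joining $u,u'$ whenever $d_G(u,u')\le2r$; a largest independent set in it is genuinely $2r$-independent, so it suffices to bound its degeneracy by about $2b\,\wcol_{2r}(G)$. By the shared-hub observation with $k=2r$, each edge $uu'$ has a hub $z$ whose two hub distances sum to at most $2r$, so one endpoint---the head---lies within distance $r$ of $z$; for a fixed tail $u$ the head is then pinned down by a choice of $z\in Q_{2r}(u)$ (at most $\wcol_{2r}(G)$ options) and a vertex of $N_r[z]\cap Y$ (at most $b$ options), so the out-degree is at most $b\,\wcol_{2r}(G)$, forcing $|E(S)|\le b\,\wcol_{2r}(G)\,|S|$ on every induced subgraph and hence degeneracy at most $2b\,\wcol_{2r}(G)$. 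The main obstacle is exactly this asymmetry: only the out-degree, not the degree, is controlled, because the far endpoint of an edge can sit at distance up to $2r$ from every candidate hub, so the orientation cannot simultaneously be made acyclic and low out-degree; obtaining the stated constant will require orienting carefully (and possibly shaving the hub count, since $z$ can be forced to differ from $u$) rather than through the crude degree bound. Composing the four links in order yields Theorem~\ref{thm-summary}.
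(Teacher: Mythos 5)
Your four-link decomposition is the same as the paper's (Corollary~\ref{cor-summary} composes Theorem~\ref{thm-dom}, LP duality, Theorem~\ref{thm-ind} and Lemma~\ref{lemma-makeindep}), but your proofs of the first and third links take genuinely different routes, and both are correct. For the first link, the paper runs an iterative rounding over an $r$-augmentation with a charge-redistribution analysis; your one-shot hub selection (pick $h(u)\in Q_r(u)$ with $\sum_{v\in R_r(h(u))}x_v\ge 1/c$, output $X=\{h(u):u\in V(G)\}$, and double-count using the fact that each $v$ lies in at most $c$ of the sets $R_r(z)$) is shorter and gives exactly the $\wcol^2_r(G)$ factor needed. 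For the third link, the paper reduces to hypergraph $b$-matching and invokes the Parekh--Pritchard integrality-gap theorem (Theorem~\ref{thm-magap}, via Corollary~\ref{cor-maga} and Lemma~\ref{lemma-out}); your greedy construction --- with the double count showing there are at most $|Y|$ saturated hubs, the LP constraint $\sum_{u\in R_r(z)}y_u\le 1$ bounding the rejected mass, and $y_u\le 1$ bounding the accepted mass --- is elementary and self-contained, a real simplification. What the paper's formulations buy in exchange is generality and sharper constants: they apply to arbitrary $r$-augmentations and give the bound $\bigl(\Delta_{r-1}+1\bigr)\Delta_r-\Delta_{r-1}$, which matters for the corollaries about bounded-indegree orientations but not for Theorem~\ref{thm-summary} itself.

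The one genuine gap is in the fourth link, exactly where you flagged it, but it is smaller than you fear and your parenthetical guess is essentially the fix. As written, out-degree at most $b\wcol_{2r}(G)$ only yields degeneracy at most $2b\wcol_{2r}(G)$, hence $\chi\le 2b\wcol_{2r}(G)+1$ and an independent set of size $|Y|/(2b\wcol_{2r}(G)+1)$, falling just short of the stated constant. But for a tail $u\in Y$, the candidate head set $\bigcup_{z\in Q_{2r}(u)}\bigl(N_r[z]\cap Y\bigr)$ always contains $u$ itself (take $z=u\in Q_{2r}(u)$ and note $u\in N_r[u]\cap Y$), while heads are by definition distinct from $u$; so the out-degree is in fact at most $b\wcol_{2r}(G)-1$. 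Then every subgraph of the conflict graph has a vertex of degree at most $2b\wcol_{2r}(G)-2$, so its chromatic number is at most $2b\wcol_{2r}(G)-1$, giving an independent set of size at least $|Y|/(2b\wcol_{2r}(G))$, which is what the theorem requires. This saving of $1$ is precisely how the paper's Lemma~\ref{lemma-makeindep} closes the same gap: there the indegree is \emph{strictly} less than $b\Delta_{2r}\bigl(\widehat{G}\bigr)$ because the loop at $y_1$ contributes only $|N_r[y_1]\cap(Y\setminus\{y_1\})|\le b-1$. Your worry about acyclicity is a non-issue: neither you nor the paper needs the orientation to be acyclic, since the factor-$2$ loss from out-degree (or indegree) to degeneracy is already budgeted into the final constant $4=2\cdot 2$.
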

In particular, $\dom_r(G)\le 4\wcol^4_r(G)\wcol_{2r}(G)\alpha_{2r}(G)$, improving Theorem~\ref{thm-apxdomin} when $\wcol_{2r}(G)$ is large enough compared to $\wcol_r(G)$.
By Theorem~\ref{thm-summary}, $\dom^\star_r(G)=\alpha^\star_{2r}(G)$ approximates $\dom_r(G)$ and $\alpha_{2r}(G)$ up to factors $\wcol^2_r(G)$ and $4\wcol^2_r(G)\wcol_{2r}(G)$,
respectively.
Inspection of our arguments shows that they are constructive and give a polynomial-time algorithm to return an $r$-dominating set of size at most $w^2_r\dom^\star_r(G)$
and a $2r$-independent set of size at least $\frac{1}{4w^2_rw_{2r}}\alpha^\star_{2r}(G)$, assuming that orderings of vertices of $G$ with weak $r$-coloring number $w_r$
and weak $2r$-coloring number $w_{2r}$ are given.  See~\cite{apxdomin,grohe2014deciding} for a discussion of the complexity and algorithms to obtain such orderings.

\section{Weak coloring number and augmentations}\label{sec:wca}

Let us start by showing the properties of the graphs $H^{(r)}$ declared in Lemma~\ref{lemma-constr}.
Let us fix the notation as in the construction of the graph.  For a vertex $v\in V(H^{(r)})\setminus (V(H)\cup E(H)\cup \{u\})$,
let $R_v$ denote the set of vertices reachable from $v$ by paths in $H^{(r)}$ not containing vertices of $V(H)\cup E(H)\cup \{u\}$,
and for the unique $e\in R_v\cap E(H)$, let $R'_v=R_v\cup e$.
We have $|R_v|\le r^2-r+3$ and $|R'_v|\le r^2-r+|e|+2$.

\begin{proof}[Proof of Lemma~\ref{lemma-constr}]
Consider the ordering of the vertices of $H^{(r)}$ where $u$ is the smallest vertex, followed by vertices of $V(H)$ in
any order, vertices of $E(H)$ in any order, and finally all other vertices of $H^{(r)}$ in any order.
Clearly $Q_{r'}(u)=\{u\}$ for every $r'$.  If $v\in V(H)$, then $Q_{r'}(v)\subseteq \{u,v\}$ when $r'\le 2r-1$.
If $e\in E(H)$, then $Q_{r'}(e)=\{e\}$ when $r'\le r-1$ and $Q_{r'}(e)\subseteq e\cup \{e,u\}$ when $r'\le 2r-1$.
For any other vertex $v\in V(H^{(r)})$, we have $Q_{r'}(v)\subseteq R_v$ when $r'\le r$
and $Q_{r'}(v)\subseteq R'_v$ when $r'\le 2r$.  We conclude that $\wcol_{r-1}(H^{(r)})\le r^2-r+3$
and $\wcol_{2r-1}(H^{(r)})\le r^2-r+t+2$.

Note that any two vertices of $V(H^{(r)})\setminus V(H)$ are at distance at most $2r$ from one another, as shown by a path
through $u$.  If $u,v\in e$ for some $e\in E(H)$, then the distance between $u$ and $v$ in $H^{(r)}$ is $2r$, as shown
by a path through $e$.  Hence, if each two vertices of $H$ are contained in a common edge, then any $2r$-independent set
in $H^{(r)}$ has at most two vertices (one in $V(H)$ and one in $V(H^{(r)})\setminus V(H)$), and thus $\alpha_{2r}(H^{(r)})\le 2$.

No vertex of $V(H)$ is at distance at most $r$ from $u$ in $H^{(r)}$. If $v\in V(H)$, then no vertex of $V(H)\setminus\{v\}$ is
at distance at most $r$ from $v$ in $H^{(r)}$.  If $v=e\in E(H)$, or $v\in V(H^{(r)})\setminus (V(H)\cup E(H)\cup \{u\})$
and $e$ is the unique element of $R_v\cap E(H)$, then no vertices of $V(H)\setminus e$ are at distance at most $r$ from $u$.
Hence, for each $v\in V(H^{(r)})$, there exists $e_v\in E(H)$ such that all vertices of $V(H)$ at distance at most $r$ from $v$
in $H^{(r)}$ belong to $e_v$.  If $D$ is a dominating set in $H^{(r)}$, it follows that $\bigcup_{v\in D} e_v=V(H)$.
If no $p$ edges of $H$ cover all vertices of $H$, then we conclude that $\dom_r(H^{(r)})>p$.

Finally, setting $x_u=1$, $x_e=1/\delta$ for all $e\in E(H)$, and $x_v=0$ for all $v\in V(H^{(r)})$ gives a feasible solution
to the program defining $\dom_r^\star(G)$, which implies that $\dom_r^\star(G)\le |E(H)|/\delta+1$.
\end{proof}

In the rest of the paper, it is convenient to perform the arguments in terms of certain directed graphs rather than weak coloring numbers; this also makes
the connection to the result of Bansal and Umboh~\cite{BANSAL201721} more transparent.

For a positive integer $r$, an \emph{$r$-augmentation} $\widehat{G}$ of a graph $G$ is an orientation of a supergraph of $G$ with $V\bigl(\widehat{G}\bigr)=V(G)$
such that each edge $e\in E\bigl(\widehat{G}\bigr)$ is assigned \emph{length} $\rho(e)\in\{0,1,\ldots,r\}$ with the following properties:
\begin{itemize}
\item[(LOOP)] Each vertex of $\widehat{G}$ is incident with a loop of length $0$.
\item[(DIST)] For any non-negative integer $r'\le r$ and vertices $u,v\in V(G)$, the distance between $u$ and $v$ in $G$ is at most
$r'$ if and only if there exists a common inneighbor $x$ of $u$ and $v$ in $\widehat{G}$ and $\rho(xu)+\rho(xv)\le r'$.
\end{itemize}
Because of (LOOP), if $uv\in E\bigl(\widehat{G}\bigr)$, then (DIST) applied with $x=u$ shows that the distance between $u$ and $v$ in $G$ is at most $\rho(uv)$.
Essentially, we augment $G$ by adding (directed) edges $uv$ representing certain paths in $G$, with $\rho(uv)$ being the length of the
corresponding path between $u$ and $v$.  It follows that $\rho(uv)>0$ for all $uv\in E\bigl(\widehat{G}\bigr)$ with $u\neq v$,
and that if $x$ and $y$ are adjacent vertices of $G$, then at least one of the directed edges $xy$ or $yx$
appears in $\widehat{G}$ with length $1$.

For a non-negative integer $r'\le r$ and $v\in V(G)$, let $\deg_{r',\widehat{G}}(v)$ be the number of
inneighbors $u$ of $v$ with $\rho(uv)\le r'$, and let $\Delta_{r'}\bigl(\widehat{G}\bigr)$
be the maximum of $\deg_{r',\widehat{G}}(v)$ over all vertices of $G$.
Let us note a connection between weak coloring numbers and augmentations.
\begin{observation}\label{obs-wcol}
Consider any ordering of vertices of a graph $G$ and a non-negative integer $r$.  Let $\widehat{G}$ be the directed graph in which
$uv\in E\bigl(\widehat{G}\bigr)$ iff $u\in Q_r(v)$, and let $\rho(uv)$ be the minimum $r'$ such that $u\in Q_{r'}(v)$.  Then
$\widehat{G}$ is an $r$-augmentation of $G$ and $\Delta_{r'}\bigl(\widehat{G}\bigr)$ is equal to the
weak $r'$-coloring number of the ordering for any non-negative $r'\le r$.
\end{observation}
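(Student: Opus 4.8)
The plan is to verify the two defining requirements of an $r$-augmentation---properties (LOOP) and (DIST)---together with the fact that $\widehat{G}$ orients a supergraph of $G$, and then to read off the statement about $\Delta_{r'}$ directly from the construction. The observation used throughout is that the family $Q_{r'}(v)$ is nondecreasing in $r'$: a path witnessing weak $r'$-accessibility of $u$ from $v$ also witnesses weak $r''$-accessibility for every $r''\ge r'$. Consequently $\rho(uv)\le r'$ holds exactly when $u\in Q_{r'}(v)$.

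The claim about $\Delta_{r'}$ is then immediate. By definition $\deg_{r',\widehat{G}}(v)$ counts the inneighbors $u$ of $v$ with $\rho(uv)\le r'$, which by the previous sentence are precisely the vertices of $Q_{r'}(v)$ (the loop accounting for $v$ itself); hence $\deg_{r',\widehat{G}}(v)=q_{r'}(v)$, and maximizing over $v$ shows that $\Delta_{r'}\bigl(\widehat{G}\bigr)$ equals the weak $r'$-coloring number of the ordering. Property (LOOP) is equally short: the length-$0$ path from $v$ to itself gives $v\in Q_0(v)$, so $vv\in E\bigl(\widehat{G}\bigr)$ with $\rho(vv)=0$.

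For (DIST) I would argue the two implications separately. For the backward direction, suppose $x$ is a common inneighbor of $u$ and $v$ with $\rho(xu)+\rho(xv)\le r'$. Since $x\in Q_{\rho(xu)}(u)$, weak accessibility supplies an honest path in $G$ between $x$ and $u$ of length at most $\rho(xu)$ (the index constraints are irrelevant for measuring distance), and likewise a path of length at most $\rho(xv)$ between $x$ and $v$; concatenating them bounds $\mathrm{dist}_G(u,v)$ by $\rho(xu)+\rho(xv)\le r'$. The forward direction is the crux. Given a shortest $u$--$v$ path $P=p_0p_1\cdots p_m$ with $m=\mathrm{dist}_G(u,v)\le r'$, I would let $x=p_j$ be the vertex of $P$ with the smallest index in the ordering. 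Then along the subpath $p_j,p_{j-1},\ldots,p_0=u$ every vertex has index at least that of $x$, so $x$ is weakly $j$-accessible from $u$ and $\rho(xu)\le j$; symmetrically $\rho(xv)\le m-j$. Adding these gives $\rho(xu)+\rho(xv)\le m\le r'$, and since $\rho(xu)\le m\le r$ and $\rho(xv)\le m\le r$, both edges $xu$ and $xv$ lie in $\widehat{G}$, so $x$ is the required common inneighbor. The same computation with $r'=1$ shows every edge of $G$ is represented, confirming that $\widehat{G}$ orients a supergraph of $G$.

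I would flag the forward direction of (DIST) as the only real obstacle: the argument hinges on choosing $x$ as the minimum-index vertex of a shortest path, which is exactly what makes both subpaths admissible as witnesses of weak accessibility while keeping their total length equal to $\mathrm{dist}_G(u,v)$. The degenerate case in which $x$ is an endpoint is handled uniformly, the empty subpath then corresponding to the loop guaranteed by (LOOP). Everything else is bookkeeping.
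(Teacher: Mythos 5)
Your proposal is correct and follows essentially the same route as the paper's proof: (LOOP) from $v\in Q_0(v)$, the backward direction of (DIST) by concatenating the paths witnessing weak accessibility, the forward direction by taking the minimum-index vertex $x$ of a shortest $u$--$v$ path so that both subpaths certify $x\in Q_{r_1}(u)\cap Q_{r_2}(v)$ with $r_1+r_2=\mathrm{dist}_G(u,v)$, and the $\Delta_{r'}$ claim read off directly from the definition of $\rho$. The only (harmless) addition is your explicit check that every edge of $G$ is represented, which the paper leaves implicit.
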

\begin{proof}
Note that $\Delta_{r'}\bigl(\widehat{G}\bigr)$ is equal to the weak $r'$-coloring number of the ordering for every $r'\le r$ by the choice of $\rho$, and thus it suffices
to argue that $\widehat{G}$ is an $r$-augmentation.

Since $v\in Q_0(v)$ for all $v\in V(G)$, (LOOP) is satisfied by $\widehat{G}$.  If vertices $u,v\in V(G)$ have a common inneighbor $x$ in $\widehat{G}$,
then their distance in $G$ is at most the sum of distances from $x$ to $u$ and $v$, which is at most $\rho(xu)+\rho(xv)$.
Conversely, suppose that the distance between $u$ and $v$ is $r'\le r$.  Let $P$ be a path of length $r'$ from $u$ to $v$,
and let $x$ be the smallest vertex of $P$ in the considered ordering.  Then $x\in Q_{r_1}(u)\cap Q_{r_2}(v)$,
where $r_1$ and $r_2$ are the lengths of the subpaths of $P$ from $x$ to $u$ and $v$, and $r'=r_1+r_2\ge \rho(xu)+\rho(xv)$.
We conclude that (DIST) holds.
\end{proof}
Note that it is possible to obtain $r$-augmentations in other ways, e.g., using the transitive fraternal augmentation
procedure of Nešetřil and Ossona de Mendez~\cite{grad1}.

\section{Domination}

We are now ready to give the approximation argument for $\dom_r$.

\begin{theorem}\label{thm-dom}
Let $r$ be a positive integer. If $\widehat{G}$ is an $r$-augmentation of a graph $G$, then
$$\dom_r(G)/\dom^\star_r(G)\le (\Delta_{r-1}\bigl(\widehat{G}\bigr)+1)\Delta_r\bigl(\widehat{G}\bigr)-\Delta_{r-1}\bigl(\widehat{G}\bigr).$$
\end{theorem}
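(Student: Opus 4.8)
The plan is to generalize the LP-rounding of Bansal and Umboh to the distance setting, using the augmentation $\widehat{G}$ as the combinatorial scaffolding. Write $d=\Delta_{r-1}\bigl(\widehat{G}\bigr)$ and $D=\Delta_r\bigl(\widehat{G}\bigr)$, and fix an optimal solution $x$ of the program defining $\dom^\star_r(G)$, so that $\sum_v x_v=\dom^\star_r(G)$ and $\sum_{v\in N_r[u]}x_v\ge 1$ for every $u$. The goal is to round $x$ to an integral $r$-dominating set $S$ with $|S|\le\bigl((d+1)D-d\bigr)\dom^\star_r(G)$; since $\dom_r(G)\le|S|$, this yields the claimed bound.

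First I would translate the covering condition into the language of $\widehat{G}$. For a vertex $w$ and $j\ge 0$, write $s_w^{(j)}=\sum_{v:\rho(wv)\le j}x_v$ for the fractional mass that $w$ reaches along edges of length at most $j$. By (DIST), every $v\in N_r[u]$ admits a common inneighbor $w$ of $u$ and $v$ with $\rho(wu)+\rho(wv)\le r$; charging each such $v$ to one such witness $w$ and summing gives, for every $u$,
\[
1\le\sum_{v\in N_r[u]}x_v\le\sum_{w\,:\,w\to u}\ \sum_{v\,:\,\rho(wv)\le r-\rho(wu)}x_v=\sum_{w\,:\,w\to u}s_w^{(r-\rho(wu))},
\]
where the outer sum effectively runs over the at most $D$ inneighbors $w$ of $u$ with $\rho(wu)\le r$. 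Two identities then drive the argument: exchanging summation shows $\sum_w s_w^{(j)}=\sum_v x_v\,\deg_{j,\widehat{G}}(v)\le\Delta_j\bigl(\widehat{G}\bigr)\dom^\star_r(G)$, giving budgets $\sum_w s_w^{(r)}\le D\,\dom^\star_r(G)$ and $\sum_w s_w^{(r-1)}\le d\,\dom^\star_r(G)$; and placing a single hub $w$ into $S$ dominates every outneighbor $u$ of $w$ with $\rho(wu)\le r$ (via the common inneighbor $w$ itself, using its loop), in particular covering $u$ whenever $w$ is a witness for $u$.

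The heart of the matter is separating the loop term from the rest. Splitting the displayed sum into the term $w=u$ (length $0$, contributing $s_u^{(r)}$) and the terms with $1\le\rho(wu)\le r$ (of which there are at most $D-1$, each bounded by $s_w^{(r-1)}$ since $r-\rho(wu)\le r-1$) yields $s_u^{(r)}+\sum_{w}s_w^{(r-1)}\ge 1$, where the sum is over the far inneighbors of $u$. The point of this split is that the ``short-range'' mass $s_w^{(r-1)}$ is charged only against the smaller budget $d\,\dom^\star_r(G)$, while the length-$r$ coverage is charged against $D\,\dom^\star_r(G)$. Combining the self/length-$0$ contribution (governed by $D$) with the at most $D-1$ far contributions (each governed by $d$) is precisely what should produce $D+d(D-1)=(d+1)D-d$ rather than the $D^2$ one gets by treating all inneighbors uniformly.

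The main obstacle is realizing exactly this constant, rather than something merely of the same order. Rounding the two layers by independent thresholds is lossy: optimizing a loop threshold against the $D\,\dom^\star_r(G)$ budget and a far threshold against the $d\,\dom^\star_r(G)$ budget under the coverage constraint $\theta_{\mathrm{loop}}+(D-1)\theta_{\mathrm{far}}\le 1$ gives $\bigl(\sqrt{D}+\sqrt{d(D-1)}\bigr)^2=(d+1)D-d+2\sqrt{Dd(D-1)}$, which overshoots the target. I therefore expect the sharp bound to require a joint, rather than independent, selection — processing candidate hubs in a fixed order (for instance by the length $\rho$ of the witnessing edge) and charging each opened far hub against portions of the two budgets that are guaranteed disjoint, so that the length-$0$ and length-$\ge 1$ charges compose additively as $D+d(D-1)$. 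Verifying that every $u$ is genuinely dominated while these charges remain non-overlapping and sum to $D\,\dom^\star_r(G)$ and $d\,\dom^\star_r(G)$ respectively is the delicate, computational step; the remaining ingredients — the LP optimum, the translation through (DIST), and the budget identities — are routine and manifestly yield a polynomial-time, constructive procedure.
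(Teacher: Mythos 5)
Your setup is sound, and you have correctly isolated the real difficulty: writing $d=\Delta_{r-1}\bigl(\widehat{G}\bigr)$, $D=\Delta_r\bigl(\widehat{G}\bigr)$, the whole content of the theorem is the exact constant $a=(d+1)D-d$, and your computation showing that independent thresholds for the loop layer and the far layer can only give $\bigl(\sqrt{D}+\sqrt{d(D-1)}\bigr)^2$ is right. But at exactly this point the proposal stops being a proof. The ``joint selection'' that is supposed to realize the additive composition $D+d(D-1)$ is described only by the properties you would like it to have (hubs opened in some order, charges against the two budgets ``guaranteed disjoint''), and you explicitly defer the verification that every vertex ends up dominated and that the charges do not overlap. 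That verification is not a routine computation to be appended to the outline; it is the missing idea. Nothing in the proposal specifies which hubs are opened, why disjointness holds, or even that a selection rule based on the aggregated masses $s_w^{(j)}$ can achieve the constant at all.

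The paper closes this gap with a mechanism that differs from your sketch in two respects. First, the threshold rounding is applied to the individual LP values, $X_0=\{v:x_v\ge 1/a\}$, not to aggregated hub masses. Second, the remaining vertices are handled by a sequential greedy pass: vertices are processed in an arbitrary order, and when $v_i$ is not yet within distance $r$ of the current set, \emph{all} inneighbors $x$ of $v_i$ with $\rho(xv_i)\le r-1$ are added (at most $d$ vertices, and $v_i$ itself is among them via its loop, so $v_i$ becomes dominated). The constant then falls out of a charging argument rather than a selection rule: when $v_i$ triggers an addition, every $u\in N_r[v_i]$ with $uv_i\notin E\bigl(\widehat{G}\bigr)$ receives charge $x_u$. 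Since no vertex of $N_r[v_i]$ lies in $X_{i-1}\supseteq X_0$, the inneighbors of $v_i$ carry fractional value less than $D/a$, so the LP constraint forces each step to create charge $\delta_i\ge 1-D/a=d(D-1)/a$, i.e., at least $(D-1)/a$ per vertex added. On the other side, (DIST) shows that every charging event for $u$ coincides with the addition of a common inneighbor $x\neq u$ of $u$ and $v_i$ with $\rho(xv_i)\le r-1$, and such an $x$ cannot already be in the set; hence $u$ is charged at most $D-1$ times, for total charge at most $(D-1)x_u$, and only vertices outside $X_0$ are ever charged. Balancing charge created against charge available gives $|X_n\setminus X_0|\le a\sum_{u\notin X_0}x_u$, hence $|X_n|\le a\dom^\star_r(G)$. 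This ``disjointness for free'' (each charge event consumes a fresh non-self inneighbor of the charged vertex) is precisely the bookkeeping your proposal calls for but does not supply.
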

\begin{proof}
Let $a=(\Delta_{r-1}\bigl(\widehat{G}\bigr)+1)\Delta_r\bigl(\widehat{G}\bigr)-\Delta_{r-1}\bigl(\widehat{G}\bigr)$.
Consider any optimal solution to the linear program defining $\dom^\star_r(G)$, and let
$X_0$ be the set of vertices $v\in V(G)$ such that $x_v\ge 1/a$ in this solution.
Let $v_1$, \ldots, $v_n$ be any ordering of vertices of $G$.  For $i=1, \ldots, n$,
if a vertex at distance at most $r$ from $v_i$ belongs to $X_{i-1}$, then let $X_i=X_{i-1}$;
otherwise, $X_i$ is obtained from $X_{i-1}$ by adding all inneighbors $x$ of $v_i$ such that $\rho(xv_i)\le r-1$.

Clearly, $X_n$ is an $r$-dominating set of $G$; hence, it suffices to bound its size.
We have
$$|X_0|\le a\sum_{u\in X_0} x_u.$$
To bound $|X_n\setminus X_0|$, we perform a charge redistribution
argument.  Vertices start with zero charge. For $i=1,\ldots, n$, if $X_i\neq X_{i-1}$, then we increase by $x_u$ the charge of each vertex
$u\in N_r[v_i]$ such that $uv_i\not\in E\bigl(\widehat{G}\bigr)$.
Let $\delta_i$ denote the total amount of charge added in this step.
Observe that since $X_i\neq X_{i-1}$,
none of vertices in $N_r[v_i]$ belongs to $X_{i-1}$.
In particular, no inneighbor of $v_i$ belongs to $X_0$, and thus $\sum_{uv_i\in E(\widehat{G})} x_u\le \Delta_r\bigl(\widehat{G}\bigr)/a$.
Since we are considering a solution to the linear program defining $\dom^\star_r(G)$, we
have the following bound on the charge increase.
$$\delta_i=\sum_{u\in N_r[v_i],uv_i\not\in E\bigl(\widehat{G}\bigr)} x_u\ge 1-\sum_{uv_i\in E\bigl(\widehat{G}\bigr)} x_u\ge 1-\Delta_r\bigl(\widehat{G}\bigr)/a=\Delta_{r-1}\bigl(\widehat{G}\bigr)(\Delta_r\bigl(\widehat{G}\bigr)-1)/a.$$
Consequently,
$$|X_i\setminus X_{i-1}|\le \Delta_{r-1}\bigl(\widehat{G}\bigr)\le \delta_i\frac{a}{\Delta_r\bigl(\widehat{G}\bigr)-1},$$
and letting $\delta=\sum_{i=1}^n \delta_i$ be the total amount of charge created, we have
$$|X_n\setminus X_0|\le \delta\frac{a}{\Delta_r\bigl(\widehat{G}\bigr)-1}.$$
On the other hand, by (DIST), when $u\in N_r[v_i]$ and $uv_i\not\in E\bigl(\widehat{G}\bigr)$, then $u$ and $v_i$ have a common inneighbor $x$
and $r\ge \rho(xu)+\rho(xv_i)\ge 1+\rho(xv_i)$.  Consequently, whenever the charge of $u$ is increased, some inneighbor of $u$ (distinct from $u$)
is added to the $r$-dominating set, and thus the final charge of $u$ is at most $(\Delta_r\bigl(\widehat{G}\bigr)-1)x_u$.  Furthermore, as we observed before, charge is only added
to vertices not belonging to $X_0$.  Summing over all vertices of $G$, we obtain
$$\delta\le (\Delta_r\bigl(\widehat{G})-1\bigr)\sum_{u\not\in X_0} x_u.$$
Combining these bounds, we obtain
$$|X_n\setminus X_0|\le a\sum_{u\not\in X_0}x_u,$$
and thus
$$\dom_r(G)\le |X_0|+|X_n\setminus X_0|\le a\sum_{u\in V(G)} x_u=a\dom_r^\star(G),$$
as required.
\end{proof}

Note that $(x+1)y-x=y^2-(y-x)(y-1)$, and thus if $1\le x\le y$, then
$(x+1)y\le y^2$.  Hence,
$(\Delta_{r-1}\bigl(\widehat{G}\bigr)+1)\Delta_r\bigl(\widehat{G}\bigr)-\Delta_{r-1}\bigl(\widehat{G}\bigr)\le \Delta^2_r\bigl(\widehat{G}\bigr)$.
By Observation~\ref{obs-wcol}, Theorem~\ref{thm-dom} has the following consequence.
\begin{corollary}\label{cor-domincol}
For any positive integer $r$ and a graph $G$,
$$\dom_r(G)/\dom^\star_r(G)\le \wcol^2_r(G).$$
\end{corollary}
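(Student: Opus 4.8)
The plan is to combine Theorem~\ref{thm-dom} with Observation~\ref{obs-wcol}, so that the entire task reduces to choosing the right $r$-augmentation and then simplifying the resulting bound algebraically. First I would fix an ordering $v_1,\ldots,v_n$ of $V(G)$ whose weak $r$-coloring number equals $\wcol_r(G)$; such an ordering exists by the definition of $\wcol_r(G)$ as a minimum over orderings. Applying Observation~\ref{obs-wcol} to this ordering produces an $r$-augmentation $\widehat{G}$ of $G$ for which $\Delta_{r'}\bigl(\widehat{G}\bigr)$ equals the weak $r'$-coloring number of the ordering for every $r'\le r$. In particular $\Delta_r\bigl(\widehat{G}\bigr)=\wcol_r(G)$, which is exactly the quantity we want to appear squared in the final bound.

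Next I would feed this augmentation into Theorem~\ref{thm-dom}, which immediately gives
$$\dom_r(G)/\dom^\star_r(G)\le (\Delta_{r-1}\bigl(\widehat{G}\bigr)+1)\Delta_r\bigl(\widehat{G}\bigr)-\Delta_{r-1}\bigl(\widehat{G}\bigr).$$
It then remains to pass from this product to the clean bound $\Delta_r^2\bigl(\widehat{G}\bigr)$. Writing $x=\Delta_{r-1}\bigl(\widehat{G}\bigr)$ and $y=\Delta_r\bigl(\widehat{G}\bigr)$, I would invoke the algebraic identity $(x+1)y-x=y^2-(y-x)(y-1)$ recorded just before the corollary, so that it suffices to check $1\le x\le y$, whereupon $(y-x)(y-1)\ge 0$ and the right-hand side is at most $y^2$.

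Verifying these two inequalities is the only genuine content beyond invoking the theorem. The upper bound $x\le y$ holds because $Q_{r-1}(v)\subseteq Q_r(v)$ for every vertex $v$ (a path witnessing weak $(r-1)$-accessibility also witnesses weak $r$-accessibility), so $q_{r-1}(v)\le q_r(v)$ pointwise and hence the maxima satisfy $\Delta_{r-1}\bigl(\widehat{G}\bigr)\le \Delta_r\bigl(\widehat{G}\bigr)$. The lower bound $x\ge 1$ holds because each vertex is weakly $(r-1)$-accessible from itself via the trivial length-$0$ path, so $q_{r-1}(v)\ge 1$ for all $v$ and therefore $\Delta_{r-1}\bigl(\widehat{G}\bigr)\ge 1$. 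Combining these with the identity yields $\dom_r(G)/\dom^\star_r(G)\le \Delta_r^2\bigl(\widehat{G}\bigr)=\wcol_r^2(G)$. I do not anticipate a real obstacle here: all the difficulty is already absorbed into Theorem~\ref{thm-dom}, and the corollary is a routine specialization obtained through the weak-coloring-number interpretation of the augmentation degrees.
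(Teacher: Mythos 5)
Your proposal is correct and follows essentially the same route as the paper: the paper also derives the corollary by combining Theorem~\ref{thm-dom} with Observation~\ref{obs-wcol} and the identity $(x+1)y-x=y^2-(y-x)(y-1)$ for $1\le x\le y$. The only difference is that you explicitly verify $1\le \Delta_{r-1}\bigl(\widehat{G}\bigr)\le \Delta_r\bigl(\widehat{G}\bigr)$, which the paper leaves implicit.
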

On the other hand, the ratio cannot be bounded in terms of $\wcol_{r-1}$, as shown by the following example.
Let $n$ be an odd integer and let
$H$ be the hypergraph with vertex sets consisting of all subsets of $\{1,\ldots,n\}$ of size $(n+1)/2$, with
edges $e_1$, \ldots, $e_n$ such that for $1\le i\le n$, the edge $e_i$ consists of the sets in $V(H)$ that contain $i$.
For any $I\subseteq \{1,\ldots,n\}$ of size $(n-1)/2$, the vertex $\{1,\ldots,n\}\setminus I$ is not incident with
any of the edges $e_i$ for $i\in I$; hence, Lemma~\ref{lemma-constr} implies $\dom_r(H^{(r)})\ge (n+1)/2$.
Each vertex of $H$ is incident with $(n+1)/2$ edges and $|E(H)|=n$, and thus $\dom_r^\star(H^{(r)})\le \frac{n}{(n+1)/2}+1\le 3$.
Also, Lemma~\ref{lemma-constr} implies $\wcol_{r-1}(H^{(r)})\le r^2-r+3$.

Corollary~\ref{cor-domincol} implies that $\dom_r(G)$ can be approximated in polynomial time within factor of $\wcol^2_r(G)$.
This bound can be improved in the special case $r=1$.
If $G$ has an orientation with maximum indegree at most $d$, then giving each edge length $1$
and adding loops of length $0$ on all vertices results in a $1$-augmentation $\widehat{G}$ with $\Delta_1\bigl(\widehat{G}\bigr)\le d+1$
and $\Delta_0\bigl(\widehat{G}\bigr)=1$.  Hence, we have the following.
\begin{corollary}
If a graph $G$ has an orientation with maximum indegree at most $d$, then $\dom(G)$ can be approximated in polynomial time
within factor of $2d+1$.
\end{corollary}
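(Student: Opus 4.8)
The plan is to specialize Theorem~\ref{thm-dom} to the case $r=1$, in which $\dom_1$ coincides with the ordinary domination number $\dom(G)$ and $N_1[v]$ is the closed neighborhood of $v$.

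First I would construct the $1$-augmentation $\widehat{G}$ exactly as in the paragraph preceding the statement: take the given orientation of $G$ of maximum indegree at most $d$, assign length $1$ to every oriented edge, and add a loop of length $0$ at each vertex. Checking that this is a genuine $1$-augmentation is routine: (LOOP) holds by construction, and for (DIST) with $r'\in\{0,1\}$ one only needs that two distinct vertices are at distance at most $1$ in $G$ precisely when they are adjacent, in which case the tail of the corresponding oriented edge is a common inneighbor realizing $\rho(xu)+\rho(xv)=1$ via the loop. The two quantities feeding Theorem~\ref{thm-dom} are then immediate: the only inneighbor $u$ of $v$ with $\rho(uv)=0$ is $v$ itself, so $\Delta_0\bigl(\widehat{G}\bigr)=1$, while the inneighbors $u$ with $\rho(uv)\le 1$ are $v$ together with the at most $d$ inneighbors of $v$ under the given orientation, so $\Delta_1\bigl(\widehat{G}\bigr)\le d+1$.

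Next I would substitute these values into the bound of Theorem~\ref{thm-dom}. With $r=1$ its right-hand side becomes $(\Delta_0\bigl(\widehat{G}\bigr)+1)\Delta_1\bigl(\widehat{G}\bigr)-\Delta_0\bigl(\widehat{G}\bigr)=2\Delta_1\bigl(\widehat{G}\bigr)-1\le 2(d+1)-1=2d+1$, yielding $\dom(G)\le(2d+1)\dom^\star(G)$. Since $\dom^\star(G)\le\dom(G)$ by Observation~\ref{obs-relfr}, the LP value $\dom^\star(G)$ approximates $\dom(G)$ within factor $2d+1$.

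Finally I would settle the algorithmic claim. The value $\dom^\star(G)=\dom^\star_1(G)$ is computable in polynomial time by solving its defining linear program, and the proof of Theorem~\ref{thm-dom} is constructive: from an optimal LP solution and any vertex ordering it explicitly builds an $r$-dominating set $X_n$—here an ordinary dominating set—of size at most $a\,\dom^\star(G)$ with $a=2d+1$. Running this rounding procedure therefore returns, in polynomial time, a dominating set of size at most $(2d+1)\dom^\star(G)\le(2d+1)\dom(G)$. There is no real obstacle beyond the bookkeeping already recorded in the preceding paragraph; the only point requiring care is confirming that the special structure forces $\Delta_0\bigl(\widehat{G}\bigr)=1$ rather than merely $\Delta_0\bigl(\widehat{G}\bigr)\le d+1$, as it is exactly this that sharpens the generic factor $\wcol^2_1(G)$ from Corollary~\ref{cor-domincol} to the linear bound $2d+1$.
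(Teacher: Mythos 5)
Your proposal is correct and follows exactly the paper's own route: the paper also builds the $1$-augmentation from the given orientation (length-$1$ edges plus length-$0$ loops), notes $\Delta_0\bigl(\widehat{G}\bigr)=1$ and $\Delta_1\bigl(\widehat{G}\bigr)\le d+1$, and plugs these into Theorem~\ref{thm-dom} to get the factor $2(d+1)-1=2d+1$. Your additional verification of (DIST) and of the constructive, polynomial-time nature of the rounding is just a more detailed write-up of the same argument.
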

Note that Bansal and Umboh~\cite{BANSAL201721} give the approximation factor as $3d$, however this is just because
we are slightly more careful in the analysis---their algorithm is exactly the same as the one of Theorem~\ref{thm-dom} in the case $r=1$.

\section{Independence}

To prove a bound on the ratio $\alpha^\star_{2r}(G)/\alpha_{2r,b}(G)$,
we use a result of Parekh and Pritchard~\cite{Parekh2015} on generalized hypergraph matching.
Let $H$ be a hypergraph and let $b$ be a positive integer.
A \emph{$b$-matching} in $H$ is a set $M$ of edges of $H$ such that each vertex of $H$ is incident with at most
$b$ edges of $M$.  Let $\mu_b(H)$ denote the maximum size of a $b$-matching in $H$.
Let $\mu^\star_b(H)$ be the fractional relaxation of this parameter, defined as
\begin{align*}
\mu^\star_b(H)&=\max \sum_{e\in E(H)} m_e\\
\text{subject to}&\\
\sum_{e\ni v} m_e&\le b&\text{for all $v\in V(H)$}\\
0\le m_e&\le 1&\text{for all $e\in E(H)$}
\end{align*}
Clearly, $\mu^\star_b(H)\ge \mu_b(H)$.  Conversely, we have the following.

\begin{theorem}[Parekh and Pritchard~\cite{Parekh2015}]\label{thm-magap}
If $H$ is a hypergraph with all edges of size at most $k$ and $b$ is a positive integer,
then
$$\mu_b(H)\ge \frac{k}{k^2-k+1}\mu_b^\star(H).$$
Furthermore, a $b$-matching of size at least $\frac{k}{k^2-k+1}\mu_b^\star(H)$ can be found in polynomial time.
\end{theorem}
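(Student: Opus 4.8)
The plan is to solve the linear program defining $\mu_b^\star(H)$ in polynomial time, obtaining an optimal fractional $b$-matching $m^\star$ of value $\mu_b^\star(H)$, and then to round $m^\star$ to an integral $b$-matching $M$ while losing at most the factor $\frac{k^2-k+1}{k}$. First I would set up a greedy, local-ratio-style rounding: process the edges in nonincreasing order of $m^\star_e$, maintaining a partial $b$-matching $M$, and add the current edge $e$ to $M$ whenever doing so keeps $M$ a $b$-matching (that is, no vertex of $e$ already carries $b$ edges of $M$); otherwise $e$ is \emph{discarded}, and I charge its weight $m^\star_e$ to a saturated vertex responsible for blocking it. Every edge is then either selected or discarded, so the whole fractional value $\mu_b^\star(H)=\sum_e m^\star_e$ gets distributed onto the selected edges, and it suffices to bound the charge each selected edge accumulates.

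The key estimate exploits the two structural ingredients at once: every edge meets at most $k$ vertices, and at every vertex the total fractional mass is at most $b$. For a selected edge $e$, the edges charged to it all share one of its at most $k$ vertices, and the competing fractional mass at each such vertex is at most $b$; after normalizing by the capacity $b$ and accounting for the contribution of $e$ itself at each of its (up to $k$) vertices, a selected edge absorbs at most $k-(k-1)m^\star_e$ units of normalized weight. Summing over $M$ gives, roughly, $\mu_b^\star(H)\le \sum_{e\in M}\bigl(k-(k-1)m^\star_e\bigr)$, which already yields an approximation ratio tending to $k$ and, reassuringly, is insensitive to $b$ (the capacity cancels after normalization, matching the $b$-independence of the stated bound).

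The main obstacle is sharpening this to the tight constant $\frac{k^2-k+1}{k}=k-1+\tfrac1k$. The bound $k-(k-1)m^\star_e$ degrades all the way to $k$ precisely for selected edges of small fractional value, so a naive greedy cannot beat factor $k$; closing this gap is the heart of the argument and is where the exact value $\frac{k}{k^2-k+1}$ — calibrated by the extremal projective-plane configurations (the triangle when $k=2$) — must be extracted. I expect the right route is to avoid committing to low-weight edges by working with an extreme-point (basic) optimal solution, whose support has controlled structure, and to show that one can always peel off an edge of fractional value at least $\frac{k}{k^2-k+1}$ and iterate on the residual instance; balancing the weight of the selected edge against the total fractional mass it displaces at its at most $k$ vertices is exactly what produces the constant $k-1+\tfrac1k$. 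Finally I would verify that each step runs in polynomial time, delivering the claimed efficient algorithm.
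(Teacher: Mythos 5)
This theorem is not proved in the paper at all: it is imported as a black box from Parekh and Pritchard~\cite{Parekh2015}, so there is no in-paper argument to compare against and your attempt has to stand on its own. It does not. Your greedy rounding with the charging bound $\mu_b^\star(H)\le\sum_{e\in M}\bigl(k-(k-1)m^\star_e\bigr)$ is a correct and standard argument, but as you yourself note it only yields the factor $k$, since selected edges may have arbitrarily small fractional value. Everything beyond that --- the entire content of the theorem, namely the improvement from $k$ to $k-1+\tfrac1k$ --- is left as a conjecture (``I expect the right route is\ldots''), so the proposal is a program, not a proof.

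Moreover, the specific route you propose provably fails. Take $b=1$ and $H$ the projective plane of order $k-1$ (e.g.\ the Fano plane for $k=3$): it has $k^2-k+1$ points and $k^2-k+1$ lines, each line containing $k$ points and each point lying on $k$ lines. Summing the vertex constraints shows $\mu_1^\star(H)\le\frac{k^2-k+1}{k}$, attained only when every vertex constraint is tight; since the point--line incidence matrix is nonsingular, the \emph{unique} (hence basic) optimal solution assigns $m_e=\tfrac1k$ to every line. But $\tfrac1k<\frac{k}{k^2-k+1}$ for every $k\ge 2$, so there is no edge of fractional value at least $\frac{k}{k^2-k+1}$ to peel off, and your iteration never starts. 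This is exactly the extremal configuration you invoke to ``calibrate'' the constant, and it shows the constant cannot be extracted by edge-by-edge peeling from a basic solution: in this instance a single selected edge must absorb the whole LP value $k-1+\tfrac1k$ at once, so the accounting has to be global. The actual argument of Parekh and Pritchard (building on the fractional relaxation bounds of F\"uredi, Kahn, and Seymour) proceeds by an iterated packing/local-ratio technique that is genuinely different from, and substantially harder than, what you sketch. If the intent were merely to make the paper self-contained with a weaker constant, your greedy argument would suffice to prove $\mu_b(H)\ge\mu_b^\star(H)/k$, and one can check that the paper's later use of this theorem (Corollary~\ref{cor-maga}, via the case distinction on $s_1$) would survive with worse constants; but the theorem as stated, with the factor $\frac{k}{k^2-k+1}$, is not established by your proposal.
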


\begin{corollary}\label{cor-maga}
If $H$ is a hypergraph with all edges of size at most $k\ge 1$, then
then $\mu_k(H)\ge \mu_1^\star(H)/2$.
\end{corollary}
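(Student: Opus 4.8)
The natural first move is to combine Theorem~\ref{thm-magap} (with $b=k$) with a comparison between $\mu_k^\star(H)$ and $\mu_1^\star(H)$. However, this route alone cannot reach the factor $2$: already for a single edge of size $k=3$ one has $\mu_3^\star(H)=\mu_1^\star(H)=1$, so the best universal constant in a bound of the form $\mu_k^\star\ge c\,\mu_1^\star$ is $c=1$, while $\tfrac{k}{k^2-k+1}=\tfrac37<\tfrac12$. Feeding any such bound into Theorem~\ref{thm-magap} therefore falls short of $\mu_k(H)\ge\mu_1^\star(H)/2$ once $k\ge 3$. I would instead argue directly, comparing an optimal \emph{fractional} $1$-matching with a single \emph{integral} $k$-matching.

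Concretely, the plan is to fix an optimal solution $x^\star$ of the program defining $\mu_1^\star(H)$, so $\sum_e x^\star_e=\mu_1^\star(H)$, and to take any inclusion-maximal $k$-matching $M$ in $H$; it then suffices to prove $\mu_1^\star(H)\le 2|M|$, since $\mu_k(H)\ge|M|$. Call a vertex \emph{saturated} if it lies in exactly $k$ edges of $M$, and let $S$ be the set of saturated vertices. I split the fractional mass as $\sum_e x^\star_e=\sum_{e\in M}x^\star_e+\sum_{e\notin M}x^\star_e$, where the first sum is at most $|M|$ because $x^\star_e\le 1$.

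For the second sum, the key observation is that maximality of $M$ forces every edge $e\notin M$ to contain a saturated vertex: otherwise all vertices of $e$ would have degree $<k$ in $M$, and $M\cup\{e\}$ would be a strictly larger $k$-matching. Hence each such $e$ meets $S$, and the constraints $\sum_{e\ni v}x^\star_e\le 1$ give $\sum_{e\notin M}x^\star_e\le\sum_{v\in S}\sum_{e\ni v}x^\star_e\le|S|$, where over-counting edges meeting $S$ in several vertices only helps the inequality. Finally I would bound $|S|\le|M|$ by double counting incidences between $S$ and $M$: each saturated vertex contributes exactly $k$, so $k|S|=\sum_{e\in M}|e\cap S|\le\sum_{e\in M}|e|\le k|M|$, which is the only place the hypothesis $|e|\le k$ enters. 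Combining the three estimates yields $\mu_1^\star(H)\le|M|+|S|\le 2|M|\le 2\mu_k(H)$, as required.

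The main things to get right are the charging step for the off-$M$ mass---ensuring each edge outside $M$ is absorbed through a saturated vertex via the $1$-matching constraint, with over-counting in the safe direction---and the clean double-counting bound $|S|\le|M|$, which is where edge size at most $k$ is used. Everything is constructive: a maximal $k$-matching is produced greedily and $x^\star$ by linear programming, so the argument also returns the desired $k$-matching in polynomial time.
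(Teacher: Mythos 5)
Your proof is correct, but it takes a genuinely different route from the paper. The paper does go through Theorem~\ref{thm-magap}: it fixes an optimal fractional $1$-matching, puts the edges with $m_e\ge 1/k$ into an integral $k$-matching $M_1$ of size at least $s_1=\sum_{e\in M_1}m_e$, and if $s_1<\mu_1^\star(H)/2$ it scales the remaining mass (supported on edges with $m_e\le 1/k$) by a factor of $k$ to get a feasible solution for $\mu_k^\star$ of value more than $k\mu_1^\star(H)/2$, whence Theorem~\ref{thm-magap} gives $\mu_k(H)>\frac{k^2}{k^2-k+1}\mu_1^\star(H)/2\ge\mu_1^\star(H)/2$. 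So your opening observation---that no bound of the form $\mu_k^\star\ge c\,\mu_1^\star$ composed with Theorem~\ref{thm-magap} can reach factor $2$---is accurate but applies only to that naive composition; the paper circumvents it by the mass-splitting step, which lets the factor $k$ from scaling cancel the loss $\frac{k}{k^2-k+1}$. Your argument instead is entirely elementary: any inclusion-maximal $k$-matching $M$ satisfies $\mu_1^\star(H)\le 2|M|$, via the charge $\sum_{e\in M}x^\star_e\le|M|$, the observation that maximality forces every $e\notin M$ to meet the set $S$ of saturated vertices (so $\sum_{e\notin M}x^\star_e\le|S|$ by the degree constraints), and the double count $k|S|=\sum_{e\in M}|e\cap S|\le k|M|$, which is where $|e|\le k$ enters. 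All steps check out, including the safe direction of the over-counting. What your approach buys: it removes the dependence on the Parekh--Pritchard theorem altogether (in this paper, Theorem~\ref{thm-magap} is invoked only here, so your proof would make that citation unnecessary), and algorithmically it replaces LP rounding by a greedy maximal $k$-matching, which still suffices for the application in Lemma~\ref{lemma-out}. What the paper's route buys is brevity once Theorem~\ref{thm-magap} is available, and a bound stated relative to rounding an optimal LP solution rather than relative to an arbitrary maximal matching; neither difference affects the downstream results.
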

\begin{proof}
Consider an optimal solution to the linear program defining $\mu_1^\star(H)$.
Let $M_1=\{e\in E(H):m_e\ge 1/k\}$ and $s_1=\sum_{e\in M_1} m_e$.
Clearly, $M_1$ is a $k$-matching in $H$, and thus $\mu_k(H)\ge |M_1|\ge s_1$.
If $s_1\ge \mu_1^\star(H)/2$, the desired bound on $\mu_k(H)$ follows, and thus
assume that $s_1<\mu_1^\star(H)/2$.

Let $m'_e=km_e$ for all $e\in E(H)$ such that $m_e\le 1/k$ and $m'_e=0$ for all other
$e\in E(H)$.  This gives a feasible solution to the program defining $\mu^\star_k(H)$, and thus
$$\mu^\star_k(H)\ge k(\mu_1^\star(H)-s_1)>k\mu_1^\star(H)/2.$$
By Theorem~\ref{thm-magap}, we have
$$\mu_k(H)>\frac{k}{k^2-k+1}k\mu_1^\star(H)/2\ge \mu_1^\star(H)/2,$$
as required.
\end{proof}

We use this result to find sets intersecting outneighborhoods in $r$-augmentations only in a bounded
number of vertices.

\begin{lemma}\label{lemma-out}
Let $G$ be a graph, let $\widehat{G}$ be an $r$-augmentation of $G$, and let $k=\Delta_r\bigl(\widehat{G}\bigr)$.
There exists a set $Y\subseteq V(G)$ such that
each vertex of $\widehat{G}$ has at most $k$ outneighbors in $Y$ and
$$|Y|\ge \alpha_{2r}^\star(G)/2.$$
\end{lemma}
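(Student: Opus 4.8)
The plan is to build the set $Y$ by interpreting the fractional independence program as a fractional matching program in an auxiliary hypergraph, and then to invoke Corollary~\ref{cor-maga}. First I would set up the hypergraph $H$ whose vertex set is $V(G)$ and whose edges are the closed in-neighborhoods induced by the augmentation: for each $v\in V(G)$, let $e_v$ be the set of in-neighbors $u$ of $v$ with $\rho(uv)\le r$ (equivalently, by (DIST) together with (LOOP), the set of vertices $u$ such that $u$ is an in-neighbor of $v$ reachable with length at most $r$). Because $k=\Delta_r\bigl(\widehat{G}\bigr)$ bounds the in-degree at each vertex, each such edge $e_v$ has size at most $k$, so $H$ has edges of size at most $k$ as required by Corollary~\ref{cor-maga}.

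The key observation connecting the two LPs is that a fractional $1$-matching in $H$ is essentially the same object as a feasible solution to the program defining $\alpha^\star_{2r}(G)$. Concretely, I would argue that the constraint $\sum_{u\in N_r[v]}y_u\le 1$ for all $v$ translates, under the correspondence between $N_r[v]$ and the hyperedges of $H$ meeting a vertex, into the matching constraint $\sum_{e\ni w}m_e\le 1$. The cleanest route is to identify each candidate vertex $u\in V(G)$ of the independence program with an edge of $H$ and each covering vertex $v$ with a vertex of $H$, so that $\alpha^\star_{2r}(G)=\mu^\star_1(H)$. I would verify this by checking that the incidence structure $u\in N_r[v]$ matches the incidence structure "edge contains vertex" in $H$; here I must use (DIST) carefully to confirm that membership of $u$ in $N_r[v]$ corresponds exactly to a common in-neighbor witnessing distance at most $2r$, which is what lets the hypergraph edges encode the distance-$r$ balls faithfully.

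Once $\alpha^\star_{2r}(G)=\mu^\star_1(H)$ is established, Corollary~\ref{cor-maga} immediately gives an integral $k$-matching $M$ in $H$ with $|M|\ge \mu^\star_1(H)/2=\alpha^\star_{2r}(G)/2$. I would then let $Y$ be the set of vertices of $G$ corresponding to the edges selected in $M$. The $k$-matching property says each vertex of $H$ lies in at most $k$ edges of $M$; translating back through the in-neighborhood correspondence, this says each vertex $w$ of $\widehat{G}$ has at most $k$ out-neighbors in $Y$, which is exactly the conclusion. The size bound $|Y|=|M|\ge \alpha^\star_{2r}(G)/2$ follows directly.

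The main obstacle I expect is setting up the vertex/edge correspondence between $H$ and $G$ so that both the objective values and the two side-constraints line up simultaneously, since the independence program constrains sums over $N_r[v]$ while the matching program constrains sums over edges through a fixed vertex. Getting the directions right---so that "out-neighbors in $Y$" in $\widehat{G}$ becomes precisely the hypergraph incidence being bounded by $k$---is the delicate bookkeeping step, and it is where I would spend the most care, using (DIST) and (LOOP) to justify that the in-neighbor edges $e_v$ capture exactly the right incidences.
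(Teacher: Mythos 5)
Your construction is exactly the one in the paper: the hypergraph $H$ on vertex set $V(G)$ whose edges are the inneighborhoods $e_v$ of vertices in $\widehat{G}$, the observation that a $k$-matching $M$ in $H$ corresponds to a set $Y=\{v : e_v\in M\}$ in which every vertex of $\widehat{G}$ has at most $k$ outneighbors, and the application of Corollary~\ref{cor-maga}. However, the pivot of your verification plan---the identity $\alpha^\star_{2r}(G)=\mu^\star_1(H)$, to be checked by showing that the incidence $u\in N_r[v]$ ``matches exactly'' the incidence $v\in e_u$---is false in general. We have $v\in e_u$ if and only if $vu\in E\bigl(\widehat{G}\bigr)$, and by (LOOP) and (DIST) this implies $u\in N_r[v]$; but the converse fails: (DIST) only guarantees that $u\in N_r[v]$ is witnessed by \emph{some} common inneighbor $x$ of $u$ and $v$, and there need not be an edge from $v$ to $u$ itself. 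So the outneighborhood of $v$ can be a proper subset of $N_r[v]$, the matching constraints are strictly weaker than the independence constraints, and $\mu^\star_1(H)$ can strictly exceed $\alpha^\star_{2r}(G)$. For instance, let $G=K_3$ with $r=1$ and let $\widehat{G}$ be the cyclic orientation with length-$1$ edges plus loops: then $\alpha^\star_2(G)=1$ (each $N_1[v]=V(G)$), while $H$ is a triangle of $2$-element edges whose fractional matching number is $3/2$.

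Fortunately this does not sink the proof, because only one direction of your claimed equality is ever used: to apply Corollary~\ref{cor-maga} you need $\mu^\star_1(H)\ge \alpha^\star_{2r}(G)$, and this is exactly the direction that the containment $\{u : v\in e_u\}\subseteq N_r[v]$ gives you. Setting $m_{e_u}=y_u$ for an optimal solution $y$ of the independence LP yields a feasible fractional $1$-matching, since
$$\sum_{e_u\ni v} m_{e_u}=\sum_{vu\in E\bigl(\widehat{G}\bigr)} y_u\le \sum_{u\in N_r[v]} y_u\le 1.$$
Replace the claimed equality by this one-sided inequality and your argument becomes precisely the paper's proof.
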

\begin{proof}
For a vertex $u\in V(G)$, let $e_u$ be the set of inneighbors of $u$ in $\widehat{G}$.
Let $H$ be the hypergraph with vertex set $V(G)$ and edge set $\{e_u:u\in V(G)\}$;
each edge of $H$ has size at most $k$.  Note that $M$ is a $k$-matching in
$H$ if and only if each vertex of $\widehat{G}$ has at most $k$ outneighbors
in $Y=\{u:e_u\in M\}$.  Hence, it suffices to prove that $\mu_k(H)\ge \alpha_{2r}^\star(G)/2$.

Consider an optimal solution to the linear program defining $\alpha_{2r}^\star(G)$,
and for every $u\in V(G)$, let $m_{e_u}=y_u$.  For each $v\in V(H)$, we have
$$\sum_{e_u\ni v} m_{e_u}=\sum_{vu\in V\bigl(\widehat{G}\bigr)} y_u\le \sum_{u\in N_r[v]} y_u\le 1,$$
and thus this gives a feasible solution to the program defining $\mu^\star_1(H)$.
We conclude that $\mu^\star_1(H)\ge \alpha_{2r}^\star(G)$, and thus the claim follows
from Corollary~\ref{cor-maga}.
\end{proof}

We are now ready to show existence of large $(2r,b)$-independent sets.
\begin{theorem}\label{thm-ind}
Let $G$ be a graph, let $\widehat{G}$ be an $r$-augmentation of $G$, and
let $b=(\Delta_{r-1}\bigl(\widehat{G}\bigr)+1)\Delta_r\bigl(\widehat{G}\bigr)-\Delta_{r-1}\bigl(\widehat{G}\bigr)$.
We have $$\alpha_{2r,b}\ge \alpha_{2r}^\star(G)/2.$$
\end{theorem}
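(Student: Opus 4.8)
The plan is to apply Lemma~\ref{lemma-out} to produce a set $Y$ that intersects each outneighborhood in at most $k=\Delta_r\bigl(\widehat{G}\bigr)$ vertices and has $|Y|\ge \alpha_{2r}^\star(G)/2$, and then argue that such a $Y$ is automatically $(2r,b)$-independent for the stated value of $b$. The key observation is that $(2r,b)$-independence asks for $|N_r[v]\cap Y|\le b$ for every $v\in V(G)$, so I need to bound, for a fixed $v$, the number of vertices of $Y$ that lie within distance $r$ of $v$ in $G$. By (DIST), every such vertex $u\in N_r[v]$ has a common inneighbor $x$ with $v$ in $\widehat{G}$ satisfying $\rho(xu)+\rho(xv)\le r$, so I would classify the vertices of $N_r[v]\cap Y$ according to the inneighbors $x$ of $v$ through which they are witnessed.

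First I would fix $v$ and enumerate the inneighbors $x$ of $v$ in $\widehat{G}$ with $\rho(xv)\le r$; there are at most $\Delta_r\bigl(\widehat{G}\bigr)$ of them. For each witness $x$, the vertices $u\in Y$ reached through $x$ are outneighbors of $x$, so by the defining property of $Y$ from Lemma~\ref{lemma-out} there are at most $k=\Delta_r\bigl(\widehat{G}\bigr)$ of them. This immediately gives a crude bound of $\Delta_r^2$, but to reach the sharper value $b=(\Delta_{r-1}+1)\Delta_r-\Delta_{r-1}$ I would separate the witness $x=v$ (available through (LOOP) as a loop of length $0$) from the other witnesses. Through the loop at $v$, the vertices of $Y$ reached are exactly the outneighbors $u$ of $v$ with $\rho(vu)\le r$, of which there are at most $\Delta_r\bigl(\widehat{G}\bigr)$; but whenever $u\neq v$ is reached through a genuine inneighbor $x\neq v$, the inequality $\rho(xv)\ge 1$ forces $\rho(xu)\le r-1$, so $u$ is a $\rho\le r-1$ outneighbor of $x$. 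The witnesses $x\neq v$ number at most $\Delta_{r-1}\bigl(\widehat{G}\bigr)$ if I restrict to those with $\rho(xv)\le r-1$; but I must be careful, since witnesses with $\rho(xv)=r$ force $\rho(xu)=0$, meaning $u=x$ by the remark that $\rho>0$ off the diagonal, so those contribute nothing new.

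The counting I anticipate is therefore: the vertex $v$ itself contributes at most one element of $Y$; the loop witness $x=v$ contributes at most $\Delta_r\bigl(\widehat{G}\bigr)$ outneighbors within distance $r$; and each of the at most $\Delta_{r-1}\bigl(\widehat{G}\bigr)$ witnesses $x$ with $1\le\rho(xv)\le r-1$ contributes at most $\Delta_r\bigl(\widehat{G}\bigr)$ of its $\rho\le r-1$ outneighbors, each of which is also a $\rho\le r$ outneighbor, hence constrained by the $k$-bound on $Y$. Combining the loop contribution with the $\Delta_{r-1}\bigl(\widehat{G}\bigr)$ proper witnesses yields $|N_r[v]\cap Y|\le (\Delta_{r-1}\bigl(\widehat{G}\bigr)+1)\Delta_r\bigl(\widehat{G}\bigr)$, and subtracting the overcounting of $v$ across these witness classes recovers the $-\Delta_{r-1}\bigl(\widehat{G}\bigr)$ term to land exactly on $b$. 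The main obstacle is this bookkeeping: ensuring the witness $x=v$ and the witnesses $x\neq v$ are partitioned so that no vertex of $Y$ is double-counted in a way that inflates past $b$, and in particular verifying that the subtracted $\Delta_{r-1}\bigl(\widehat{G}\bigr)$ correctly accounts for the repeated appearance of $v$ (which is itself both an outneighbor of the loop at $v$ and a potential outneighbor of other witnesses through its own loop). Getting that overlap exactly right, rather than settling for the loose $\Delta_r^2$ bound, is the delicate part of the argument.
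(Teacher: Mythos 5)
Your opening moves match the paper exactly: take the set $Y$ from Lemma~\ref{lemma-out} and bound $|N_r[v]\cap Y|$ for each $v$ by classifying every $y\in N_r[v]\cap Y$ according to a witness $x$ supplied by (DIST). The gap is in the classification itself. Your claim that witnesses $x$ with $\rho(xv)=r$ ``contribute nothing new'' is false: such a witness forces $\rho(xy)=0$, hence $y=x$, so $y$ is an inneighbor of $v$ with $\rho(yv)=r$ that lies in $Y$. Such a $y$ need not be an outneighbor of $v$ (edges of $\widehat{G}$ are directed; $yv\in E\bigl(\widehat{G}\bigr)$ does not imply $vy\in E\bigl(\widehat{G}\bigr)$), and need not be an outneighbor of any witness with $\rho\le r-1$, because (DIST) guarantees only that \emph{some} witness exists and $y$ itself may be the only one. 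Concretely, if $G$ is a path $y,w_1,\ldots,w_{r-1},v$ and the augmentation comes from Observation~\ref{obs-wcol} with $y$ first in the ordering, then the unique common inneighbor of $y$ and $v$ is $y$ itself, and $y$ is an outneighbor of no vertex other than itself; so if $y\in Y$, both of your classes miss it. There can be as many as $\deg_{r,\widehat{G}}(v)-\deg_{r-1,\widehat{G}}(v)$ such vertices, and they must enter the bound as an additive term.

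Second, your recovery of the $-\Delta_{r-1}\bigl(\widehat{G}\bigr)$ correction assumes $v$ is overcounted in every witness class, which presupposes $v\in Y$; when $v\notin Y$ there is nothing to subtract and your count stalls at $(\Delta_{r-1}\bigl(\widehat{G}\bigr)+1)\Delta_r\bigl(\widehat{G}\bigr)=b+\Delta_{r-1}\bigl(\widehat{G}\bigr)$. The paper's bookkeeping avoids both problems and needs no subtraction: by (LOOP), the loop at $v$ already makes $v$ one of the at most $\deg_{r-1,\widehat{G}}(v)$ inneighbors of $v$ with $\rho\le r-1$, so the witnesses with $\rho\le r-1$ (loop included) contribute at most $k\cdot\deg_{r-1,\widehat{G}}(v)$ vertices of $Y$, while the $\rho=r$ inneighbors contribute at most $\deg_{r,\widehat{G}}(v)-\deg_{r-1,\widehat{G}}(v)$ more; the total, $\deg_{r,\widehat{G}}(v)+(k-1)\deg_{r-1,\widehat{G}}(v)$, is monotone in both degrees (as $k\ge 1$) and hence at most $\Delta_r\bigl(\widehat{G}\bigr)+(k-1)\Delta_{r-1}\bigl(\widehat{G}\bigr)=b$. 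Note that one must keep the per-vertex degrees until this final step: bounding each class by the global maxima separately, as you do, cannot land on $b$, because $\deg_{r-1,\widehat{G}}(v)$ enters the count of $\rho=r$ inneighbors negatively.
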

\begin{proof}
Let $k=\Delta_r\bigl(\widehat{G}\bigr)$, and let $Y$ be the set obtained by applying Lemma~\ref{lemma-out},
such that every vertex of $\widehat{G}$ has at most $k$ outneighbors belonging to $Y$.
For any $v\in V(G)$ and $y\in N_r[v]\cap Y$, (DIST) implies that either $y$ is an inneighbor of $v$ in $\widehat{G}$
and $\rho(yv)=r$, or $y$ and $v$ have a common inneighbor $x$ with $\rho(xv)\le r-1$.  Hence,
we have
\begin{align*}
|N_r[v]\cap Y|&\le (\deg_{r,\widehat{G}}(v)-\deg_{r-1,\widehat{G}}(v))+\sum_{xv\in E\bigl(\widehat{G}\bigr),\rho(xv)\le r-1} |\{y\in Y: xy\in E\bigl(\widehat{G}\bigr)|\\
&\le (\deg_{r,\widehat{G}}(v)-\deg_{r-1,\widehat{G}}(v))+\deg_{r-1,\widehat{G}}(v))\cdot k\\
&=\deg_{r,\widehat{G}}(v)+(k-1)\deg_{r-1,\widehat{G}}(v)\\
&\le \Delta_r\bigl(\widehat{G}\bigr)+(k-1)\Delta_{r-1}\bigl(\widehat{G}\bigr)=b,
\end{align*}
and thus $Y$ is a $(2r,b)$-independent set in $G$.
Consequently,
$$\alpha_{2r,b}(G)\ge |Y|\ge \alpha_{2r}^\star(G)/2,$$
as required.
\end{proof}

The following lemma clarifies the relationship with Theorem~\ref{thm-apxdomin}.
\begin{lemma}\label{lemma-makeindep}
If $\widehat{G}$ is a $2r$-augmentation of $G$,
then for every positive integer $b$,
$$\alpha_{2r}(G)\ge \frac{1}{2b\Delta_{2r}\bigl(\widehat{G}\bigr)}\alpha_{2r,b}(G).$$
\end{lemma}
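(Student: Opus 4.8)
The plan is to pass to a maximum $(2r,b)$-independent set $Y$, so that $|Y|=\alpha_{2r,b}(G)$, and to extract from $Y$ a genuine $2r$-independent set whose size is a $\tfrac{1}{2b\Delta_{2r}\bigl(\widehat{G}\bigr)}$-fraction of $|Y|$. To this end I form the \emph{conflict graph} $C$ on vertex set $Y$, joining $u,v\in Y$ whenever their distance in $G$ is at most $2r$; an independent set in $C$ is precisely a $2r$-independent subset of $Y$, hence a $2r$-independent set in $G$. It therefore suffices to exhibit an independent set of $C$ of size at least $|Y|/\bigl(2b\Delta_{2r}\bigl(\widehat{G}\bigr)\bigr)$, which I will obtain by bounding the degeneracy of $C$.

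The key step is to orient the edges of $C$ with small maximum outdegree. For each edge $\{u,v\}$ of $C$, property (DIST) (applicable since $\widehat{G}$ is a $2r$-augmentation and the distance between $u$ and $v$ is at most $2r$) yields a common inneighbor $x$ with $\rho(xu)+\rho(xv)\le 2r$; I fix one such witness $x_{uv}$ for every edge. Since the two lengths sum to at most $2r$, the smaller of them is at most $r$, so one endpoint—the \emph{near} one—lies in $N_r[x_{uv}]$. I then orient each edge from the endpoint of larger $\rho(x_{uv}\cdot)$-value to the one of smaller value, breaking ties by a fixed ordering of $V(G)$. Thus for every outedge $w\to v$ the target $v$ is near, so $v\in N_r[x_{wv}]\cap Y$; moreover the witness $x_{wv}$ can never be the loop at $w$, since $x_{wv}=w$ would force $\rho(wv)\le\rho(ww)=0$ and hence $v=w$ (as $\rho$ is positive on non-loops), contradicting $v\neq w$. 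Consequently $x_{wv}$ is one of the at most $\Delta_{2r}\bigl(\widehat{G}\bigr)-1$ non-loop inneighbors of $w$.

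To bound the outdegree of a fixed $w$, I note that the map sending an outedge $w\to v$ to the pair $(x_{wv},v)$ is injective, because distinct outedges have distinct targets. Its image consists of pairs whose first coordinate is one of the at most $\Delta_{2r}\bigl(\widehat{G}\bigr)-1$ non-loop inneighbors $x$ of $w$ and whose second coordinate lies in $N_r[x]\cap Y$; by $(2r,b)$-independence we have $|N_r[x]\cap Y|\le b$, so there are at most $b\bigl(\Delta_{2r}\bigl(\widehat{G}\bigr)-1\bigr)$ such pairs. Hence $C$ admits an orientation of maximum outdegree at most $b\bigl(\Delta_{2r}\bigl(\widehat{G}\bigr)-1\bigr)$; since every induced subgraph inherits this orientation and therefore contains a vertex of degree at most twice the outdegree bound, $C$ is $2b\bigl(\Delta_{2r}\bigl(\widehat{G}\bigr)-1\bigr)$-degenerate. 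It is thus properly colorable with at most $2b\bigl(\Delta_{2r}\bigl(\widehat{G}\bigr)-1\bigr)+1\le 2b\Delta_{2r}\bigl(\widehat{G}\bigr)$ colors (here $b\ge 1$ is exactly what absorbs the additive term), and the largest color class is a $2r$-independent set of size at least $|Y|/\bigl(2b\Delta_{2r}\bigl(\widehat{G}\bigr)\bigr)=\alpha_{2r,b}(G)/\bigl(2b\Delta_{2r}\bigl(\widehat{G}\bigr)\bigr)$, as required.

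I expect the main obstacle to be making the orientation argument fully rigorous: choosing the witnesses consistently and resolving ties so that the witness of \emph{every} outedge is certified to be a non-loop inneighbor. This loop exclusion is not cosmetic, since without it the outdegree bound would only be $b\Delta_{2r}\bigl(\widehat{G}\bigr)$ and the resulting chromatic bound $2b\Delta_{2r}\bigl(\widehat{G}\bigr)+1$ would be too weak to yield the stated constant; so the positivity of $\rho$ on genuine edges must be invoked with care.
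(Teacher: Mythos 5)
Your proof is correct and is essentially the paper's own argument: both form the conflict graph on a maximum $(2r,b)$-independent set $Y$, use (DIST) to orient each edge via a common inneighbor in $\widehat{G}$ so that the in/out-degree is bounded by roughly $b\Delta_{2r}\bigl(\widehat{G}\bigr)$, and then use the resulting degeneracy to properly color with at most $2b\Delta_{2r}\bigl(\widehat{G}\bigr)$ colors and take the largest class. The only (immaterial) differences are that your orientation is the reverse of the paper's, and you absorb the loop contribution by forbidding loop witnesses on outedges, whereas the paper absorbs it via the strict inequality $|N_r[y_1]\cap(Y\setminus\{y_1\})|\le b-1$ at the loop inneighbor.
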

\begin{proof}
Let $Y$ be a $(2r,b)$-independent set in $G$ of size $\alpha_{2r,b}(G)$.  Let $G_1$ be the graph with vertex set $Y$ and distinct vertices $y_1, y_2\in Y$
adjacent iff their distance in $G$ is at most $2r$.  Orient the edges of $G_1$ as follows: if $v$ is an inneighbor of $y_1$ in $\widehat{G}$
and $y_2\in N_r[v]$, then direct the edge from $y_2$ to $y_1$.
Since $Y$ is $(2r,b)$-independent, we have $|N_r[v]\cap (Y\setminus \{y_1\})|\le b$ for each inneighbor $v$ of $y_1$,
and the inequality is strict when $v=y_1$.  Hence, the maximum indegree of $G_1$ is less than $d=b\Delta_{2r}\bigl(\widehat{G}\bigr)$.
Furthermore, all edges of $G_1$ are directed in at least one direction by (DIST).  
Consequently, each subgraph $F$ of $G_1$ has less than $d|V(F)|$ edges,
and thus $G_1$ is $(2d-1)$-degenerate.  Consequently, $\chi(G_1)\le 2d$, and thus
$G_1$ contains an independent set $Y_1$ of size at least $\frac{|Y|}{2d}=\frac{\alpha_{2r,b}(G)}{2b\Delta_{2r}(\widehat{G})}$.
Observe that $Y_1$ is a $2r$-independent in $G$, which gives the required lower bound on $\alpha_{2r}(G)$.
\end{proof}

Composing Theorems~\ref{thm-dom} and \ref{thm-ind} with Lemma~\ref{lemma-makeindep}, and using
Observation~\ref{obs-wcol}, we obtain the following inequalities, implying Theorem~\ref{thm-summary}.
\begin{corollary}\label{cor-summary}
Let $G$ be a graph, let $\widehat{G_1}$ be an $r$-augmentation of $G$
and let $\widehat{G_2}$ be a $2r$-augmentation of $G$.
Let $b=(\Delta_{r-1}\bigl(\widehat{G_1}\bigr)+1)\Delta_r\bigl(\widehat{G_1}\bigr)-\Delta_{r-1}\bigl(\widehat{G_1}\bigr)$.
Then
$$\frac{1}{b}\dom_r(G)\le \dom^\star_r(G)=\alpha^\star_{2r}(G)\le 2\alpha_{2r,b}(G)\le 4b\Delta_{2r}\bigl(\widehat{G_2}\bigr)\alpha_{2r}(G).$$
In particular,
$$\frac{1}{\wcol^2_r(G)}\dom_r(G)\le \dom^\star_r(G)=\alpha^\star_{2r}(G)\le 2\alpha_{2r,\wcol^2_r(G)}(G)\le 4\wcol^2_r(G)\wcol_{2r}(G)\alpha_{2r}(G),$$
and if $G$ has an orientation with maximum indegree at most $d$, then
$$\frac{1}{2d+1}\dom(G)\le \dom^\star_1(G)=\alpha^\star_2(G)\le 2\alpha_{2,2d+1}(G)\le 4(2d+1)\wcol_2(G)\alpha_2(G).$$
\end{corollary}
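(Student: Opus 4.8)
The plan is to read the displayed chain as four separate links and recognize that each link is precisely one of the results already established, applied to the appropriate augmentation. First I would set $a=b$ and invoke Theorem~\ref{thm-dom} for $\widehat{G_1}$: its conclusion is exactly $\dom_r(G)/\dom^\star_r(G)\le b$, which rearranges to the leftmost link $\frac{1}{b}\dom_r(G)\le \dom^\star_r(G)$. The central equality $\dom^\star_r(G)=\alpha^\star_{2r}(G)$ is quoted verbatim from Observation~\ref{obs-relfr}. Next I would observe that the quantity $b$ defined in the corollary is syntactically identical to the quantity $b$ in Theorem~\ref{thm-ind}; applying that theorem to $\widehat{G_1}$ yields $\alpha_{2r,b}(G)\ge \alpha^\star_{2r}(G)/2$, i.e.\ the third link $\alpha^\star_{2r}(G)\le 2\alpha_{2r,b}(G)$. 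Finally, Lemma~\ref{lemma-makeindep} applied to the $2r$-augmentation $\widehat{G_2}$ gives $\alpha_{2r}(G)\ge \frac{1}{2b\Delta_{2r}(\widehat{G_2})}\alpha_{2r,b}(G)$, which after clearing denominators and doubling becomes the rightmost link $2\alpha_{2r,b}(G)\le 4b\Delta_{2r}(\widehat{G_2})\alpha_{2r}(G)$. Concatenating the four links proves the general inequality.

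For the first specialization I would supply concrete augmentations via Observation~\ref{obs-wcol}. Fixing an ordering attaining $\wcol_r(G)$ and letting $\widehat{G_1}$ be the associated $r$-augmentation, the observation gives $\Delta_r(\widehat{G_1})=\wcol_r(G)$, while $Q_{r-1}(v)\subseteq Q_r(v)$ forces $\Delta_{r-1}(\widehat{G_1})\le \Delta_r(\widehat{G_1})$ and (LOOP) guarantees $\Delta_{r-1}(\widehat{G_1})\ge 1$. With $1\le \Delta_{r-1}(\widehat{G_1})\le \Delta_r(\widehat{G_1})$ in hand I would apply the elementary inequality $(x+1)y-x\le y^2$ recorded after Theorem~\ref{thm-dom} to conclude $b\le \wcol^2_r(G)$. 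Because $1/b\ge 1/\wcol^2_r(G)$, the leftmost term weakens to $\frac{1}{\wcol^2_r(G)}\dom_r(G)$; and because every $(2r,b)$-independent set is a fortiori $(2r,\wcol^2_r(G))$-independent, monotonicity gives $\alpha_{2r,b}(G)\le \alpha_{2r,\wcol^2_r(G)}(G)$, so the middle term is bounded by $2\alpha_{2r,\wcol^2_r(G)}(G)$. Taking $\widehat{G_2}$ from an ordering attaining $\wcol_{2r}(G)$, so that $\Delta_{2r}(\widehat{G_2})=\wcol_{2r}(G)$, I would then apply Lemma~\ref{lemma-makeindep} once more, this time with parameter $\wcol^2_r(G)$, to bound $2\alpha_{2r,\wcol^2_r(G)}(G)\le 4\wcol^2_r(G)\wcol_{2r}(G)\alpha_{2r}(G)$. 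Assembling these substitutions yields the stated specialization.

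For the indegree-$d$ case I would reuse the reduction noted for the case $r=1$: an orientation with maximum indegree at most $d$ becomes a $1$-augmentation $\widehat{G_1}$ with $\Delta_1(\widehat{G_1})\le d+1$ and $\Delta_0(\widehat{G_1})=1$, whence $b=(\Delta_0(\widehat{G_1})+1)\Delta_1(\widehat{G_1})-\Delta_0(\widehat{G_1})=2(d+1)-1=2d+1$. Here $b=2d+1$ exactly, so the middle term already reads $\alpha_{2,2d+1}(G)$ and no monotonicity step is needed. Choosing $\widehat{G_2}$ from an ordering attaining $\wcol_2(G)$ and substituting $b=2d+1$ and $\Delta_2(\widehat{G_2})=\wcol_2(G)$ into the general chain produces the last displayed bound.

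I do not expect a genuine mathematical obstacle, since the corollary is a pure composition; the only real care is bookkeeping. The subtle point is to verify that the expression $b$ defined in the corollary coincides \emph{exactly}---not merely up to the estimate $\Delta^2_r(\widehat{G_1})$---with the quantity $a$ of Theorem~\ref{thm-dom} and with the $b$ of Theorem~\ref{thm-ind}, so that the single augmentation $\widehat{G_1}$ drives the first and third links simultaneously; and to confirm that the hypotheses $1\le \Delta_{r-1}(\widehat{G_1})\le \Delta_r(\widehat{G_1})$ required for $(x+1)y-x\le y^2$ actually hold (the lower bound supplied by (LOOP)) before invoking it, and that the final link of the specialization is a fresh application of Lemma~\ref{lemma-makeindep} at parameter $\wcol^2_r(G)$ rather than at the corollary's $b$.
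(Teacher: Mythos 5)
Your proof is correct and takes essentially the same approach as the paper: the paper's entire proof of this corollary is the remark that it follows by composing Theorem~\ref{thm-dom}, Observation~\ref{obs-relfr}, Theorem~\ref{thm-ind}, and Lemma~\ref{lemma-makeindep}, specializing via Observation~\ref{obs-wcol} and the inequality $(x+1)y-x\le y^2$, which is exactly what you do. One negligible slip: in the indegree-$d$ case the constructed $1$-augmentation only guarantees $\Delta_1\bigl(\widehat{G_1}\bigr)\le d+1$, so $b\le 2d+1$ rather than $b=2d+1$ exactly, and you do need the same monotonicity step $\alpha_{2,b}(G)\le\alpha_{2,2d+1}(G)$ that you already employed in the first specialization.
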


\bibliographystyle{siam}
\bibliography{apxdomin}

\end{document}